\tikzstyle{vertex}=[circle, draw, inner sep=0pt, minimum size=6pt]
\newtheorem{thm}{Theorem}[section]
\newtheorem{cor}[thm] {Corollary}
\newtheorem{lem} [thm]{Lemma}
\theoremstyle{definition} 
\newtheorem{defn}[thm]{Definition}
\raggedbottom \pagestyle{myheadings} \hbadness = 10000 \tolerance = 10000
\numberwithin{equation}{section}
\newcommand\diag{\operatorname{diag}}
\def\ni{\noindent}
\tikzset{
  vertex/.style={circle, fill, inner sep=2pt},
  midarrow/.style={
    postaction={
      decorate,
      decoration={
        markings,
        mark=at position 0.5 with {\arrow{>}}
      }
    }
  }
}
\begin{document}
\label{'ubf'}
\setcounter{page}{1} 

\markboth {\hspace*{-9mm} \centerline{\footnotesize \sc
    Distance Matrices for Conjugate Skew Gain Graphs}
                 }
                { \centerline {\footnotesize \sc
             Shahul Hameed K, Ramakrishnan K O and Biju K} \hspace*{-9mm}
               }
\begin{center}
{
       {\huge \textbf{Distance Matrices\\  for Conjugate Skew Gain Graphs
                               }
       }
\\

\medskip

Shahul Hameed K \footnote{\small  Department of
Mathematics, K M M Government\ Women's\ College, Kannur - 670 004,\ Kerala,  \ India.  E-mail: shahulhameed@kmmgwc.ac.in} 
Ramakrishnan K O \footnote{\small  Department of
Mathematics, K M M Government\ Women's\ College, Kannur - 670 004,\ Kerala,  \ India.  E-mail: ramkomaths@gmail.com} 
Biju K \footnote{\small Department of Mathematics, P R N S S College, Mattanur - 670\ 702,\ Kerala,\ India.\ Email: bijukaronnon@gmail.com}

}
\end{center}

\thispagestyle{empty}
\begin{abstract}
\ni  A conjugate skew gain graph is a skew gain graph with the labels (also called, the conjugate skew gains) from the field of complex numbes on the oriented edges such that they get conjugated when we reverse the orientation. In this paper we introduce distance matrices for conjugate skew gain graphs and characterize balanced conjugate skew gain graphs using these matrices. We provide explicit formulae for the distance spectra of certain conjugate skew gain graphs.
\\
---------------------------------------------------------------------------------------------\\
\end{abstract}
\textbf{Key Words:} Conjugate skew gain graphs, Distance matrix, Eigenvalues, Distance compatibility.\\
\textbf{Mathematics Subject Classification (2020):} \ 05C22, 05C50.
\section{Introduction}
In this paper, we deal with the ideas of distance and distance matrices for conjugate skew gain graphs which already exist in literature for the discrete structures such as graphs quite early (for example refer to \cite{fh1}) and more recently for signed graphs~\cite{shkdist} and complex unit gain graphs~\cite{kannan}. In~\cite{shkcsg}, we have extended the theory of balance to conjugate skew gain graphs characterizing it using their adjacency and $g$-Laplacian matrices.

Now we enlist some notations used in this paper. $\mathbb{C}^{\times}$ denotes the set of non-zero complex numbers. Also, $\Re(z)$ and $\Im(z)$ denote the real and imaginary parts of a complex number $z$ respectively. The reader may refer to \cite{fh,tz1,tz2, j1} in that order for the basic definitions and other details dealing with graphs, signed graphs, gain graphs and skew gain graphs. All the underlying graphs in this article are simple, finite and connected, unless otherwise mentioned. The symbol $\overrightarrow{E}$ stands for the collection of oriented edges such that for an edge $uv\in E$ of a graph $G=(V,E)$, we have two oriented edges $\overrightarrow{uv}$ and $\overrightarrow{vu}$ in $\overrightarrow{E}$. First of all, we begin with the definition of a conjugate skew gain graph.  
\begin{defn}(\cite{shkcsg}) \rm{Let $G=(V, E)$ be a graph with some prescribed orientation for the edges. A \emph{conjugate skew-gain graph} (or for brevity, a csg) $G^{\varphi}$ is such that the \emph{conjugate skew gain function} $\varphi:\overrightarrow{E}\rightarrow \mathbb{C}^\times$ satisfies $\varphi(\overrightarrow{vu})=\overline{\varphi(\overrightarrow{uv})}$.} 
\end{defn}  
The \emph{conjugate skew gain}, $\varphi(\overrightarrow{C})$, of an oriented cycle $\overrightarrow{C}:v_0v_1 \dots v_nv_0$ (i.e., it is a sequence of oriented edges $\overrightarrow{v_0v_1},\overrightarrow{v_1v_2},\cdots,\overrightarrow{v_nv_0}$) in a csg, is the product $\varphi(\overrightarrow{v_0v_1})\varphi(\overrightarrow{v_1v_2})\cdots\varphi(\overrightarrow{v_nv_0})$ of the conjugate skew gains of its edges and its conjugate, namely $\overline{\varphi(\overrightarrow{C})}$, is the conjugate skew gain $\varphi(\overleftarrow{C})$ of the reverse order oriented cycle $\overleftarrow{C}: v_0v_n \dots v_1v_0$. Similary, for an oriented path $\overrightarrow{P}: v_1v_2\cdots v_{k-1}v_k$ of length $k$, its conjugate skew gain is denoted by $\varphi(\overrightarrow{P})$ and $\varphi(\overleftarrow{P})$ denotes the conjugate skew gain of the oriented path in the recerse order given by $\overleftarrow{P}:v_kv_{k-1}\cdots v_2v_1$ so that $\varphi(\overleftarrow{P})=\overline{\varphi(\overrightarrow{P})}$. The notations $v\sim u$ and $v\sim e$ denote the adjacency of vertices $u$ and $v$ and that of the edge $e$ incident with the vertex $v$, respectively. We take the order of the underlying graph $G$ as $n$, quite often, of a csg $G^{\varphi}$, unless otherwise mentioned.
As we specifically deal with the balance in csgs, we provide the definition of the same as follows.
\begin{defn}(\cite{shkcsg}) \rm{A csg $G^{\varphi}$ is said to be balanced, if the conjugate skew gain $\varphi(\overrightarrow{C})=\prod_{e\in E(C)}\varphi(e)$ of every oriented cycle $\overrightarrow{C}$ in it satisfies $\varphi(\overrightarrow{C})=|\varphi(\overrightarrow{C})|$. i.e., if $\arg\big(\varphi(\overrightarrow{C})\big)=2p\pi$ for some integer $p$. }
\end{defn}
The balance theory for signed graphs, and complex unit gain graphs as well, coincides with the above one when we take $\varphi(C)=1$ for every cycle $C$. Note that the social balance theory, pioneered by Harary, Heider, Cartwright, et al., explores social networks using many tools including graph theory.  Historical instances like Heider's cognitive balance and Cartwright's sociological applications emphasize the significance of social balance theory. From a graph-theoretical perspective, social balance theory unveils mechanisms governing social relationships and group dynamics, aiding alliance formation, cooperation, and conflict resolution. 
\section{Distance matrices for csgs}\label{sec1}
By the notation $P_{(u,v)}$, we mean a path $P$ connecting the vertices $u$ and $v$. $\overrightarrow{P}_{(u,v)}$ is an oriented path from $u$ to $v$ and the collection of all such oriented shortest paths from $u$ to $v$ is written as $\mathcal{P}_{(u,v)}$. 
\begin{defn} Two vertices $u$ and $v$ in a csg $G^{\varphi}$ are said to be argument-wise distance compatible if for any two shortest oriented paths $\overrightarrow{P}_{(u,v)}$ and $\overrightarrow{Q}_{(u,v)}$ from $u$ to $v$, $\arg\Big(\varphi(\overrightarrow{P}_{(u,v)})/\varphi(\overrightarrow{Q}_{(u,v)})\Big)=\arg\big(\varphi(\overrightarrow{P}_{(u,v)})\big)-\arg\big(\varphi(\overrightarrow{Q}_{(u,v)})\big)=2p\pi$, for some integer $p$. Moreover, $G^{\varphi}$ itself is said to be argument-wise distance compatible if every pair of distinct vertices is argument-wise distance compatible. \\
Two vertices $u$ and $v$ are said to be modulus-wise distance compatible if for any two shortest oriented paths $\overrightarrow{P}_{(u,v)}$ and $\overrightarrow{Q}_{(u,v)}$ from $u$ to $v$, $|\varphi(\overrightarrow{P}_{(u,v)})|=|\varphi(\overrightarrow{Q}_{(u,v)})|$ or in other words $|\varphi(\overrightarrow{P}_{(u,v)})/\varphi(\overrightarrow{Q}_{(u,v)})|=1$. The csg $G^{\varphi}$ itself is said to be modulus-wise distance compatible if every pair of distinct vertices is modulus-wise distance compatible. \\
Two vertices $u$ and $v$ in a csg $G^{\varphi}$ are said to be distance compatible if they are both arugument-wise and modulus-wise distance compatible. $G^{\varphi}$ itself is said to be distance compatible if every pair of distinct vertices is distance compatible. 
\end{defn}
Using the lexicographical order in $\mathbb{C}$ (i.e., $a+ib<c+id$ if either $a<c$ or if $a=c$ then $b<d$), we define two quantities as follows.\\
$Q1: \varphi^{\max}_{(u,v)}=\max\{\varphi(\overrightarrow{P}_{(u,v)}):\overrightarrow{P}_{(u,v)}\in \mathcal{P}_{(u,v)} \}$\\
$Q2:\varphi^{\min}_{(u,v)}=\min\{\varphi(\overrightarrow{P}_{(u,v)}):\overrightarrow{P}_{(u,v)}\in \mathcal{P}_{(u,v)} \}$.\\
With respect to the two quantities defined above we define two complex valued distance measures for a connected csg $G^{\varphi}$ as:\\
$D1: d^{\max}(u,v)=\varphi^{\max}_{(u,v)}d(u,v)$ and \\
$D2: d^{\min}(u,v)=\varphi^{\min}_{(u,v)}d(u,v)$.\\
Further using the above, we define two types of distance matrices for a csg as follows:\\
$M1: D^{\max}(G^{\varphi})=\big(d^{\max}(u,v)\big)$ and \\
$M2: D^{\min}(G^{\varphi})=\big(d^{\min}(u,v)\big)$.

Note that in the case of a distance compatible csg $G^{\varphi}$, $d^{\max}(u,v)=d^{\min}(u,v)$ for all $u,v\in V(G)$ and so $D^{\max}(G^{\varphi})=D^{\min}(G^{\varphi})$. The common matrix in the case of a distance compatible csg $G^{\varphi}$ will be denoted by $D(G^{\varphi})$. Also for a distance compatible csg $G^{\varphi}$, the common value of $\varphi^{\max}_{(u,v)}$ and $\varphi^{\min}_{(u,v)}$ is denoted by $\varphi_{(u,v)}$.
\section{Some theorems on distance compatiblity}
In this section we provide some theorems on distance compatibility.
\begin{thm}\label{bimdist} Every balanced csg is argument-wise distance compatible.
\end{thm}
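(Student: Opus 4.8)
The plan is to reduce the statement to the defining property of balance by turning a pair of shortest paths into cycles. Fix two vertices $u,v$ and two shortest oriented paths $\overrightarrow{P}_{(u,v)}$ and $\overrightarrow{Q}_{(u,v)}$. Since $\arg\big(\varphi(\overrightarrow{P})\big)-\arg\big(\varphi(\overrightarrow{Q})\big)\equiv \arg\big(\varphi(\overrightarrow{P})\,\overline{\varphi(\overrightarrow{Q})}\big)\pmod{2\pi}$ and $\varphi(\overleftarrow{Q})=\overline{\varphi(\overrightarrow{Q})}$, it suffices to show that $\varphi(\overrightarrow{P})\,\varphi(\overleftarrow{Q})$ is a positive real number, equivalently that its argument is an integer multiple of $2\pi$. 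This quantity is exactly the conjugate skew gain of the closed walk obtained by first traversing $\overrightarrow{P}$ from $u$ to $v$ and then $\overleftarrow{Q}$ from $v$ back to $u$, so the task is to control the argument of this closed walk using balance.

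First I would record the key combinatorial observation coming from both paths being shortest: if $z$ lies on both $P$ and $Q$, then the initial segments of $P$ and $Q$ from $u$ to $z$ are both shortest $u$–$z$ paths, so $z$ occupies the same position $d(u,z)$ on each path. Hence the common vertices of $P$ and $Q$ can be listed as $u=z_0,z_1,\dots,z_r=v$ in order of increasing distance from $u$, and this order is the same along $P$ and along $Q$. Writing $P=P_1P_2\cdots P_r$ and $Q=Q_1Q_2\cdots Q_r$, where $P_i$ and $Q_i$ are the sub-paths from $z_{i-1}$ to $z_i$, every internal vertex of $P_i$ lies on $P$ but is not a common vertex, hence does not lie on $Q$; consequently $P_i$ and $Q_i$ meet only at their endpoints $z_{i-1}$ and $z_i$.

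Next I would treat each index $i$ separately. If $P_i=Q_i$, then $\varphi(\overrightarrow{P_i})/\varphi(\overrightarrow{Q_i})=1$ is positive real. Otherwise $P_i$ and $Q_i$ are distinct paths sharing only their endpoints, so $\overrightarrow{C_i}:=\overrightarrow{P_i}\,\overleftarrow{Q_i}$ is a genuine oriented cycle. Balance applies to $\overrightarrow{C_i}$ and gives $\varphi(\overrightarrow{C_i})=|\varphi(\overrightarrow{C_i})|$; since $\varphi(\overrightarrow{C_i})=\varphi(\overrightarrow{P_i})\,\overline{\varphi(\overrightarrow{Q_i})}$, the number $\varphi(\overrightarrow{P_i})\,\overline{\varphi(\overrightarrow{Q_i})}$ is a positive real, and dividing by the positive real $|\varphi(\overrightarrow{Q_i})|^2$ shows $\varphi(\overrightarrow{P_i})/\varphi(\overrightarrow{Q_i})$ is positive real. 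Finally, using $\varphi(\overrightarrow{P})=\prod_i\varphi(\overrightarrow{P_i})$ and $\varphi(\overrightarrow{Q})=\prod_i\varphi(\overrightarrow{Q_i})$, the ratio $\varphi(\overrightarrow{P})/\varphi(\overrightarrow{Q})$ is a product of positive reals, hence positive real; equivalently $\arg\big(\varphi(\overrightarrow{P})\big)-\arg\big(\varphi(\overrightarrow{Q})\big)=2p\pi$ for some integer $p$, as required.

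I expect the main obstacle to be the combinatorial alignment step rather than the algebra: one must ensure that the common vertices occur in the same order on both shortest paths and that the resulting segments $P_i,Q_i$ are internally disjoint, for only then are the closed walks $\overrightarrow{C_i}$ actual cycles to which the hypothesis of balance may be applied. The shortest-path property is precisely what guarantees this, and it is also what rules out the degenerate possibility of a shared edge being traversed in opposite directions by $P$ and $Q$.
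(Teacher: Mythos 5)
Your proof is correct and follows essentially the same route as the paper's: close up the pair of shortest paths into oriented cycles and apply the balance hypothesis to each. In fact your handling of the case where the paths share internal vertices --- ordering the common vertices by distance from $u$ (using that initial segments of shortest paths are shortest) and checking that the segments between consecutive common vertices are internally disjoint, so that each $\overrightarrow{P_i}\,\overleftarrow{Q_i}$ is a genuine cycle --- supplies the detail that the paper's Case~2 only asserts when it claims $\overrightarrow{P}_{(u,v)}\cup\overleftarrow{Q}_{(u,v)}$ decomposes into cycles.
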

\begin{proof} Let $G^{\varphi}$ be a balanced csg. If possible assume on the contrary that there are two vertices $u$ and $v$ such that there exists shortest paths $P_{(u,v)}:uv_1v_2\cdots v_{k-1}v$ and $Q_{(u,v)}:uw_1w_2\cdots w_{k-1}v$ (of course, with some of the internal vertices are common for both the paths or all of the internal vertices are distinct) such that $\arg\big(\varphi(\overrightarrow{P}_{(u,v)})\big)-\arg\big(\varphi(\overrightarrow{Q}_{(u,v)})\big)\neq 2p\pi$ for any integer $p$. We deal with the following two cases.\\
Case 1: When all the internal vertices are distinct for the two paths $P_{(u,v)}$ and $Q_{(u,v)}$. Then $\overrightarrow{P}_{(u,v)}\bigcup \overleftarrow{Q}_{(u,v)}$ is the oriencted cycle $\overrightarrow{C}:uv_1v_2\cdots v_{k-1}vw_{k-1}w_{k-2}\cdots w_1u$. Thus, assuming that $\varphi(\overrightarrow{uv_1})=r_1e^{i\theta_1}, \varphi(\overrightarrow{v_1v_2})=r_2e^{i\theta_2},\cdots, \varphi(\overrightarrow{v_{k-1}v})=r_ke^{i\theta_k}$ and similarly, $\varphi(\overrightarrow{uw_1})=\rho_1e^{i\psi_1}, \varphi(\overrightarrow{w_1w_2})=\rho_2e^{i\psi_2},\cdots, \varphi(\overrightarrow{w_{k-1}v})=\rho_ke^{i\psi_k}$, we get $\varphi(\overrightarrow{C})=\big(\prod\limits_i r_i\rho_i \big ) e^{\sum\limits_i(\theta_i-\psi_i)}=|\varphi(\overrightarrow{C})|e^{\sum\limits_i(\theta_i-\psi_i)}$. Then applying the assumption that $G^{\varphi}$ is balanced, we get $e^{\sum\limits_i(\theta_i-\psi_i)}=1$ which implies $\sum\limits_i(\theta_i-\psi_i)=2q\pi$ for some integer $q$. This provides $\arg\big(\varphi(\overrightarrow{P}_{(u,v)})\big)-\arg\big(\varphi(\overrightarrow{Q}_{(u,v)})\big)=2q\pi$, a contradiction.\\
Case 2: When some of the internal vertices are common for both the paths, we can write $\overrightarrow{P}_{(u,v)}\bigcup \overleftarrow{Q}_{(u,v)}$ as the union of some oriented cycles $\overrightarrow{C}_j$ so that if $ \arg\big(\varphi(\overrightarrow{P}_{(u,v)})\big)-\arg\big(\varphi(\overrightarrow{Q}_{(u,v)})\big)\neq 2p\pi$, there is at least one cycle $C_k$ in that union of cycles for which $\varphi(\overrightarrow{C_k})\neq|\varphi(\overrightarrow{C_k})|$, making $G^{\varphi}$ unbalanced, leading to a contradiction in this case also.\\
Thus the balanced csgs are argument-wise distance compatible.
\end{proof}

The following figure illustrates that the converse of the above theorem need not be always true, as it is unbalanced though it is argument-wise distance compatible.
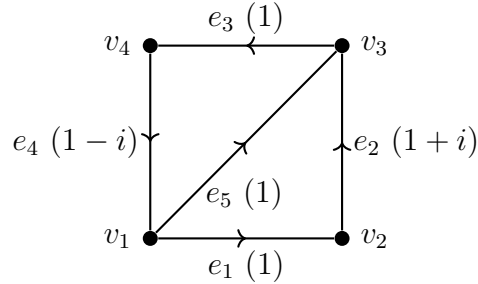
\begin{figure}[h!]
	\centering
	\begin{tikzpicture}[x=0.85cm, y=0.855cm]
		\node[vertex, label=left:$v_1$] (u) at (0,0) {};
		\node[vertex, label=right:$v_2$] (v) at (3,0) {};
		\node[vertex, label=left:$v_4$] (w) at (0,3) {};
		\node[vertex, label=right:$v_3$] (x) at (3,3) {};
		
		\path[midarrow, thick, draw] (u) to node[below]{$e_1\ (1)$} (v);
		\path[midarrow, thick, draw]  (v) to node[right]{$e_2\ (1+i)$} (x);
		\path[midarrow, thick, draw]  (x) to node[above]{$e_3\ (1)$} (w);
		\path[midarrow, thick, draw]  (w) to node[left]{$e_4\ (1-i)$} (u);
		\path[midarrow, thick, draw]  (u) to node[right,pos=0.075mm]{$e_5\ (1)$} (x);
	\end{tikzpicture}
	\caption{Counter example for the converse of Theorem~\ref{bimdist}; edge skew gains in the brackets}
	\label{fig4}
\end{figure}
Also, an argument-wise distance compatible and balanced csg need not be modulus-wise distance compatible as in Figure~\ref{fig5} given below.
\begin{figure}[h!]
	\centering
	\begin{tikzpicture}[x=0.85cm, y=0.855cm]
		\node[vertex, label=left:$v_1$] (u) at (0,0) {};
		\node[vertex, label=right:$v_2$] (v) at (3,0) {};
		\node[vertex, label=left:$v_4$] (w) at (0,3) {};
		\node[vertex, label=right:$v_3$] (x) at (3,3) {};
		
		\path[midarrow, thick, draw] (u) to node[below]{$e_1\ (e^{i\theta})$} (v);
		\path[midarrow, thick, draw]  (v) to node[right]{$e_2\ (2e^{-i\theta})$} (x);
		\path[midarrow, thick, draw]  (x) to node[above]{$e_3\ (3e^{-i\theta})$} (w);
		\path[midarrow, thick, draw]  (w) to node[left]{$e_4\ (4e^{i\theta})$} (u);
	\end{tikzpicture}
	\caption{Balanced and argument-wise distance compatible, but not modulus-wise}
	\label{fig5}
\end{figure}
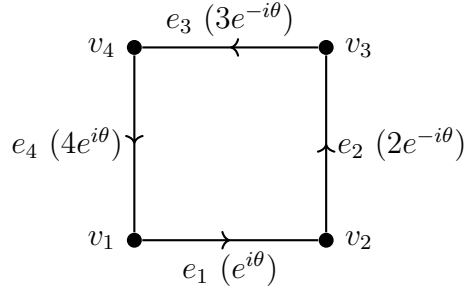
\begin{thm} A bipartite and connected csg $G^{\varphi}$ (i.e., the underlying graph $G$ is bipartite and connected) is balanced if and only if it is argument-wise distance compatible.
\end{thm}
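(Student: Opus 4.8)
The forward implication requires no new work: Theorem~\ref{bimdist} already establishes that every balanced csg is argument-wise distance compatible, and that argument uses no bipartiteness hypothesis, so it applies verbatim to our bipartite and connected $G^{\varphi}$. The real content is therefore the converse, and this is where bipartiteness must be exploited. The plan for the converse is to manufacture a vertex potential out of the compatibility hypothesis and then show that every single edge is consistent with it.

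Fix a root vertex $u$, which is possible since $G$ is connected. For each vertex $w$ choose any shortest oriented path $\overrightarrow{P}_{(u,w)}$ and set $f(w)=\arg\big(\varphi(\overrightarrow{P}_{(u,w)})\big)$, read modulo $2\pi$. The argument-wise distance compatibility of $G^{\varphi}$ says precisely that $\arg\big(\varphi(\overrightarrow{P}_{(u,w)})\big)$ is independent of the shortest path chosen, so $f$ is a well-defined function $f\colon V\to\mathbb{R}/2\pi\mathbb{Z}$.

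The key step, and the only place bipartiteness enters, is to prove that $\arg\big(\varphi(\overrightarrow{xy})\big)\equiv f(y)-f(x)\pmod{2\pi}$ for every oriented edge $\overrightarrow{xy}$. Since $G$ is bipartite, $x$ and $y$ lie in opposite parts, so $d(u,x)$ and $d(u,y)$ have opposite parities and in particular are unequal; together with $|d(u,x)-d(u,y)|\le 1$ this forces $d(u,y)=d(u,x)+1$ after possibly swapping the names of $x$ and $y$. Now take a shortest path $\overrightarrow{P}_{(u,x)}$; because $d(u,y)>d(u,x)$, the vertex $y$ cannot lie on this path, so appending the edge $\overrightarrow{xy}$ yields a genuine path from $u$ to $y$ of length $d(u,x)+1=d(u,y)$, that is, a shortest path. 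Evaluating $f(y)$ along this specific shortest path and using that $\varphi$ multiplies along concatenated paths (so that arguments add modulo $2\pi$) gives $f(y)\equiv f(x)+\arg\big(\varphi(\overrightarrow{xy})\big)\pmod{2\pi}$, which is the desired edge relation.

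Finally I would feed this edge relation into an arbitrary oriented cycle $\overrightarrow{C}\colon w_0w_1\cdots w_{m-1}w_0$. Summing around the cycle telescopes: $\arg\big(\varphi(\overrightarrow{C})\big)=\sum_i\arg\big(\varphi(\overrightarrow{w_iw_{i+1}})\big)\equiv\sum_i\big(f(w_{i+1})-f(w_i)\big)\equiv 0\pmod{2\pi}$, so $\arg\big(\varphi(\overrightarrow{C})\big)=2p\pi$ for some integer $p$, and hence $G^{\varphi}$ is balanced. I expect the main obstacle to be the careful justification of the edge step, namely arguing via the parity/triangle-inequality squeeze that $y$ is off the chosen shortest path so that concatenation produces a shortest path and not merely some path, while keeping all argument identities consistent modulo $2\pi$; the remainder is routine bookkeeping with the additivity of $\arg$ under products.
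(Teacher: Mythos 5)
Your proposal is correct, but it proves the converse by a genuinely different route from the paper. The paper argues by contradiction: it takes a shortest unbalanced (necessarily even) cycle $\overrightarrow{C}$, splits it at two antipodal vertices into two arcs of equal length, treats these arcs as two shortest $u$--$v$ paths, and observes that argument-wise compatibility then forces $\arg\big(\varphi(\overrightarrow{C})\big)\equiv 0\pmod{2\pi}$, a contradiction. You instead build a global potential $f(w)=\arg\big(\varphi(\overrightarrow{P}_{(u,w)})\big)$ from a root $u$ (well defined exactly by the compatibility hypothesis), use bipartiteness only to guarantee the parity squeeze $d(u,y)=d(u,x)+1$ so that every edge extends some shortest path, deduce the edge relation $\arg\big(\varphi(\overrightarrow{xy})\big)\equiv f(y)-f(x)$, and telescope around an arbitrary cycle. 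Your version buys something real: it treats \emph{every} cycle uniformly and avoids the point the paper leaves implicit, namely that the two arcs of a shortest unbalanced cycle are in fact shortest paths between their endpoints (which requires a minimality argument the paper does not spell out); it is also essentially the construction of the switching function $\zeta(w)=e^{-if(w)}$ that normalizes all arguments, so it connects naturally to Theorem~\ref{switchshk}. The paper's argument is shorter when one accepts that step. One small point you should make explicit: the edge relation is first derived only for the orientation in which the distance to the root increases, and you need $\varphi(\overrightarrow{yx})=\overline{\varphi(\overrightarrow{xy})}$, hence $\arg\big(\varphi(\overrightarrow{yx})\big)\equiv -\arg\big(\varphi(\overrightarrow{xy})\big)$, to see that the same relation holds for the reversed orientation before telescoping around a cycle whose edges are traversed in both senses relative to the BFS layering.
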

\begin{proof} Using Theorem~\ref{bimdist}, one way implication follows easily. For the converse, assume on the contrary that $G^{\varphi}$, the bipartite connected csg, is not balanced even though it is argument-wise distance compatible. Then there is a shortest oriented cycle $ \overrightarrow{C}:uv_1v_2\cdots v_{k-1}v w_1 w_2\cdots w_{k-1}u $ of even length say $2k$, such that $\varphi(\overrightarrow{C})\neq |\varphi(\overrightarrow{C})|$. This means that $\arg\big(\varphi(\overrightarrow{C})\big)\neq 2p\pi$ for all integers $p$. But the two oriented paths $\overrightarrow{P}_{(u,v)}:uv_1v_2\cdots v_{k-1}v$ and $\overrightarrow{Q}_{(u,v)}=uw_{k-1}w_{k-2}\cdots w_1v$ that can be constructed from the above cycle give
\begin{align*} \arg\big(\varphi(\overrightarrow{P}_{(u,v)})\big)-\arg\big(\varphi(\overrightarrow{Q}_{(u,v)})\big)&= \arg\big(\varphi(\overrightarrow{P}_{(u,v)})\big)+\arg\big(\varphi(\overleftarrow{Q}_{(u,v)})\big)\\
&=\arg\big(\varphi(\overrightarrow{C})\big)\\
&\neq 2p\pi
\end{align*} for all integers $p$ and it leads to a contradiction. Thus $G^{\varphi}$ must be balanced.
\end{proof}
\begin{thm} A connected csg $G^{\varphi}$ is distance compatible if and only if every block of it is distance compatible.
\end{thm}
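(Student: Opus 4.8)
The plan is to first reformulate distance compatibility in a multiplicative form and then exploit the block structure of a connected graph. Note that two vertices $u,v$ are distance compatible exactly when every shortest oriented $u$-$v$ path carries the same conjugate skew gain: equality of the moduli together with equality of the arguments modulo $2\pi$ forces two complex numbers to be equal, so distance compatibility of the pair $(u,v)$ is equivalent to $\varphi(\overrightarrow{P}_{(u,v)})=\varphi(\overrightarrow{Q}_{(u,v)})$ for all $\overrightarrow{P}_{(u,v)},\overrightarrow{Q}_{(u,v)}\in\mathcal{P}_{(u,v)}$. I will use this reformulation throughout, together with the fact that the gain is multiplicative under concatenation of oriented paths.

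For the forward implication I would argue that if $u$ and $v$ lie in a common block $B$, then every shortest $u$-$v$ path of $G$ lies entirely inside $B$. Indeed, a path that leaves $B$ and returns must exit and re-enter through a cut vertex of $B$, hence repeat that vertex, contradicting simplicity of a shortest path. Consequently $d_G(u,v)=d_B(u,v)$ and the families of shortest paths in $G$ and in $B$ between $u,v$ coincide. Therefore the hypothesis that $G^{\varphi}$ is distance compatible immediately restricts to each block, giving distance compatibility of every block.

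The converse is the substantive direction, and its crux is a block-cut-tree decomposition. For arbitrary $u,v$, there is a unique ordered list of cut vertices $c_1,\dots,c_{m-1}$ (the cut vertices lying on the block-path in the block-cut tree joining the block of $u$ to the block of $v$) that every $u$-$v$ path must traverse in this order; writing $c_0=u$, $c_m=v$, one has the additivity $d(u,v)=\sum_{j=1}^{m}d(c_{j-1},c_j)$, with the segment of any shortest $u$-$v$ path between $c_{j-1}$ and $c_j$ being a shortest path lying in a single block $B_j$. I would then argue: since each $B_j$ is distance compatible, the gain of that segment depends only on its endpoints $c_{j-1},c_j$ and equals the common value $\varphi_{(c_{j-1},c_j)}$, so by multiplicativity every shortest $u$-$v$ path has gain $\prod_{j=1}^{m}\varphi_{(c_{j-1},c_j)}$, which is independent of the chosen path; hence $u,v$ are distance compatible.

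The main obstacle is establishing the combinatorial decomposition cleanly, namely that every shortest $u$-$v$ path passes through the same cut vertices in the same order and splits into intra-block shortest segments with distances adding. Once this separating property of cut vertices is in hand, the gain computation is routine multiplicativity; so I expect the write-up to hinge on carefully invoking (or briefly proving) this block-cut-tree fact about shortest paths.
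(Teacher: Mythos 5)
Your proof is correct and follows essentially the same route as the paper's: decompose every shortest $u$--$v$ path at the cut vertices separating $u$ from $v$, apply distance compatibility of each block to the resulting segments, and conclude by multiplicativity of the conjugate skew gain. If anything, your block-cut-tree formulation is more complete than the paper's, which treats only the two-block configuration explicitly and dismisses the general case with ``similarly we can claim the result for other cases.''
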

\begin{proof} The technique of the proof is almost same as that in \cite{shkdist} dealing with a similar result in the case of signed graphs and that for complex unit gain graphs given in \cite{kannan} . So we adapt the same here also to suit it for csgs.
Assuming first that  $G^{\varphi}$ is distance compatible, it is trivial that every block of it must be so, otherwise it will violate the distance compatibility of the entire csg itself.	\\
Conversely let $B_1, B_2,\cdots, B_p$ be the blocks of $G^{\varphi}$. Suppose that each of these blocks is distance compatible. Let $u,v\in V (G)$. If $u$ and $v$ are in the same block then they are clearly distance compatible. On the other hand, if $u$ and $v$ are in different blocks, without loss of generality,
suppose $u$ lies in $B_1$ and $v$ in $B_2$. Then any shortest path $P_{(u,v)}$ in $G$ must pass through the cut vertices $v_i$ and $v_j$ where $v_i$ and $v_j$ are in $B_1$ and $B_2$, respectively (of course, $v_i$ may be same as $v_j$). Any shortest path $P_{(u,v)}$ can be decomposed into $P_{(u,v_i)}\cup P_{(v_i,v_j)}\cup P_{(v_j,v)}$.
Since $B_1$ is distance compatible, so any shortest path has unique gain. As the vertices $v_i$ and $v_j$ are connected by a unique path, so $\varphi_{(v_i , v_ j)}$ is unique. Similarly we can claim the result for other cases. Therefore, any shortest path from $u$ to $v$ has same conjugate skew gain. Hence in this case also $u$ and $v$ are distance compatible. Thus, $G^{\varphi}$ is distance compatible.
\end{proof}
\section{Characterization of balance using distance matrices}
In this section we provide a characterization of connected balanced csgs in terms of the distance matrices defined in Section~\ref{sec1}. First, we shall deal with the impact of switching on the compatibility in a connected csg. We denote by $\mathbb{T}$, the set of all complex numbers with modulus one. i.e., $\mathbb{T}=\{z\in\mathbb{C}:|z|=1\}$. By a switching we mean the following. A function $\zeta:V \rightarrow \mathbb{T}\subset \mathbb{C}^\times $ is called a \textit{switching function} for a csg $G^{\varphi}$, if it results in a csg $G^{\varphi^\zeta}$ where $\varphi^{\zeta}$ satisfies $ \varphi^{\zeta}(\overrightarrow{uv})=\overline{\zeta(u)}\varphi(\overrightarrow{uv})\zeta(v)$.
\begin{thm}\label{switch0} In a connected csg $G^{\varphi}$, a switching does not change the pairs of distance compatible vertices.
\end{thm}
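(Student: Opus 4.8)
The plan is to show that switching multiplies every shortest-path gain between a fixed pair of vertices by one and the same unit scalar, so that all ratios of path gains — and these are exactly the quantities the two compatibility conditions constrain — are left unchanged. First I would fix vertices $u,v$ and an oriented shortest path $\overrightarrow{P}_{(u,v)}: u=x_0,x_1,\dots,x_k=v$, and compute the switched gain directly from the rule $\varphi^{\zeta}(\overrightarrow{x_{i-1}x_i})=\overline{\zeta(x_{i-1})}\varphi(\overrightarrow{x_{i-1}x_i})\zeta(x_i)$. Taking the product over the edges of the path, the factors contributed by $\zeta$ telescope: each internal vertex $x_j$ contributes both $\zeta(x_j)$ and $\overline{\zeta(x_j)}$, and since $\zeta(x_j)\in\mathbb{T}$ we have $\zeta(x_j)\overline{\zeta(x_j)}=1$. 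Hence
\[
\varphi^{\zeta}(\overrightarrow{P}_{(u,v)})=\overline{\zeta(u)}\,\zeta(v)\,\varphi(\overrightarrow{P}_{(u,v)}),
\]
and the scalar $\overline{\zeta(u)}\zeta(v)$ depends only on the endpoints, not on the chosen path.

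With this identity in hand, the two compatibility conditions follow at once. Given any two shortest oriented paths $\overrightarrow{P}_{(u,v)}$ and $\overrightarrow{Q}_{(u,v)}$, the common prefactor cancels in the quotient, so that $\varphi^{\zeta}(\overrightarrow{P}_{(u,v)})/\varphi^{\zeta}(\overrightarrow{Q}_{(u,v)})=\varphi(\overrightarrow{P}_{(u,v)})/\varphi(\overrightarrow{Q}_{(u,v)})$. Taking arguments shows that the argument-wise condition $\arg\big(\varphi(\overrightarrow{P}_{(u,v)})/\varphi(\overrightarrow{Q}_{(u,v)})\big)=2p\pi$ holds for $G^{\varphi^{\zeta}}$ exactly when it holds for $G^{\varphi}$; taking moduli (and using $|\overline{\zeta(u)}\zeta(v)|=1$) shows the same for the modulus-wise condition $|\varphi(\overrightarrow{P}_{(u,v)})/\varphi(\overrightarrow{Q}_{(u,v)})|=1$. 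Therefore $u$ and $v$ are distance compatible in $G^{\varphi^{\zeta}}$ if and only if they are distance compatible in $G^{\varphi}$, and connectivity is used only to guarantee that such shortest paths exist for every pair.

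Since this equivalence holds for every pair $\{u,v\}$, the collection of distance compatible pairs is identical before and after switching, which is the assertion. I do not expect a genuine obstacle here: the whole argument rests on the telescoping computation, so the only points needing care are the bookkeeping that the interior $\zeta$-factors cancel in conjugate pairs, and the explicit use of $\mathbb{T}\subset\mathbb{C}^{\times}$, i.e. $|\zeta(\cdot)|=1$, both when cancelling those interior factors and when asserting $|\overline{\zeta(u)}\zeta(v)|=1$. (Invertibility of switching, whose inverse is given by $\overline{\zeta}$, could be invoked to deduce one direction from the other, but the displayed quotient identity already yields the equivalence directly.)
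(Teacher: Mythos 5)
Your proof is correct and follows essentially the same route as the paper's: both establish the key identity $\varphi^{\zeta}(\overrightarrow{P}_{(u,v)})=\overline{\zeta(u)}\,\varphi(\overrightarrow{P}_{(u,v)})\,\zeta(v)$ and then observe that the unit-modulus endpoint factors cancel in both the modulus and the argument comparisons. Your version just spells out the telescoping of the interior factors $\zeta(x_j)\overline{\zeta(x_j)}=1$ more explicitly than the paper does.
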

\begin{proof} Let $u$ and $v$ be a pair of distance compatible vertices in the connected csg $G^{\varphi}$ and let $\zeta:V\rightarrow \mathbb{T}$ be any switching function. Then for any oriented path $\overrightarrow{P}_{(u,v)}$, we have $\varphi^\zeta(\overrightarrow{P}_{(u,v)})=\overline{\zeta(u)}\varphi(\overrightarrow{P}_{(u,v)})\zeta(v)$. Thus, if $|\varphi(\overrightarrow{P}_{(u,v)})|=|\varphi(\overrightarrow{Q}_{(u,v)})|$ and if $\arg\big(\varphi(\overrightarrow{P}_{(u,v)})\big)-\arg\big(\varphi(\overrightarrow{Q}_{(u,v)})\big)= 2p\pi$ for some integer $p$ for any two oriented paths $\overrightarrow{P}_{(u,v)}$ and $\overrightarrow{Q}_{(u,v)}$, then an easy calculation shows that $|\varphi^\zeta(\overrightarrow{P}_{(u,v)})|=|\varphi^\zeta(\overrightarrow{Q}_{(u,v)})|$ and $\arg\big(\varphi^\zeta(\overrightarrow{P}_{(u,v)})\big)-\arg\big(\varphi^\zeta(\overrightarrow{Q}_{(u,v)})\big)= 2p\pi$ for the same integer $p$. Hence switching preserves all distance compatible pairs of vertices. In fact, in the above calculation one has to use $|\zeta(u)|=1=|\zeta(v)|$ and for the arguments they get cancelled while the subtraction is being done.
\end{proof}
 \begin{thm}\label{switch2}
	If $G^{\varphi}$ is switched to $G^{{\varphi}^\zeta}$ and if $D^{\max}(G^{\varphi})=D^{\min}(G^{\varphi})=D(G^{\varphi})$ then $D^{\max}(G^{{\varphi}^\zeta})=D^{\min}(G^{{\varphi}^\zeta})=D(G^{{\varphi}^\zeta})$ and $D(G^{\varphi})$ is similar to $D(G^{{\varphi}^\zeta})$.
\end{thm}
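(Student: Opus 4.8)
The plan is to split the statement into its two assertions and treat them separately, leaning on Theorem~\ref{switch0} for the first and on an explicit conjugating diagonal matrix for the second. Observe first that the hypothesis $D^{\max}(G^{\varphi})=D^{\min}(G^{\varphi})$ is exactly the statement $\varphi^{\max}_{(u,v)}=\varphi^{\min}_{(u,v)}$ for every pair $u,v$, i.e., that $G^{\varphi}$ is distance compatible.

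For the first assertion I would invoke Theorem~\ref{switch0}: since switching by $\zeta$ does not change the collection of distance compatible pairs, and every pair is distance compatible in $G^{\varphi}$ by hypothesis, every pair remains distance compatible in $G^{\varphi^{\zeta}}$. Hence the maximum and minimum shortest-path gains coincide for every pair in the switched csg, which yields $D^{\max}(G^{\varphi^{\zeta}})=D^{\min}(G^{\varphi^{\zeta}})=D(G^{\varphi^{\zeta}})$ at once.

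For the similarity I would first record how a path gain transforms under switching. Writing a shortest path from $u$ to $v$ as $u=x_0,x_1,\dots,x_k=v$ and using $\varphi^{\zeta}(\overrightarrow{x_{i-1}x_i})=\overline{\zeta(x_{i-1})}\,\varphi(\overrightarrow{x_{i-1}x_i})\,\zeta(x_i)$, the product telescopes because each intermediate factor $\zeta(x_i)\overline{\zeta(x_i)}=|\zeta(x_i)|^2=1$, leaving $\varphi^{\zeta}(\overrightarrow{P}_{(u,v)})=\overline{\zeta(u)}\,\varphi(\overrightarrow{P}_{(u,v)})\,\zeta(v)$. Consequently the common gain obeys $\varphi^{\zeta}_{(u,v)}=\overline{\zeta(u)}\,\zeta(v)\,\varphi_{(u,v)}$, so the $(u,v)$ entry of $D(G^{\varphi^{\zeta}})$ equals $\overline{\zeta(u)}\,\zeta(v)\,\varphi_{(u,v)}\,d(u,v)$, where $\varphi_{(u,v)}\,d(u,v)$ is the $(u,v)$ entry of $D(G^{\varphi})$. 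I then introduce the diagonal matrix $\mathcal{Z}=\diag\big(\zeta(v_1),\dots,\zeta(v_n)\big)$, which is unitary since each $|\zeta(v_i)|=1$, so that $\mathcal{Z}^{-1}=\mathcal{Z}^{*}=\diag\big(\overline{\zeta(v_1)},\dots,\overline{\zeta(v_n)}\big)$. Reading off the $(u,v)$ entry of $\mathcal{Z}^{-1}D(G^{\varphi})\mathcal{Z}$ gives precisely $\overline{\zeta(u)}\,\varphi_{(u,v)}\,d(u,v)\,\zeta(v)$, the corresponding entry of $D(G^{\varphi^{\zeta}})$; hence $D(G^{\varphi^{\zeta}})=\mathcal{Z}^{-1}D(G^{\varphi})\mathcal{Z}$, which is the claimed similarity.

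The computations are routine; the only step needing care is the telescoping identity, where one must track which endpoint contributes $\zeta$ versus $\overline{\zeta}$ and confirm consistency with the conjugate convention $\varphi(\overrightarrow{vu})=\overline{\varphi(\overrightarrow{uv})}$. Since this identity already underlies the proof of Theorem~\ref{switch0}, no genuine obstacle remains; moreover, the unitarity of $\mathcal{Z}$ shows that the two distance matrices are in fact unitarily similar.
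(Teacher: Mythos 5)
Your proposal is correct and follows essentially the same route as the paper: the first assertion is deduced from Theorem~\ref{switch0}, and the similarity is exhibited by conjugating with the unitary diagonal matrix $\diag(\zeta(v_j))$. You are merely more explicit about the telescoping identity $\varphi^{\zeta}(\overrightarrow{P}_{(u,v)})=\overline{\zeta(u)}\,\varphi(\overrightarrow{P}_{(u,v)})\,\zeta(v)$, and your ordering $D(G^{\varphi^{\zeta}})=\mathcal{Z}^{*}D(G^{\varphi})\mathcal{Z}$ is in fact the one consistent with the paper's switching convention (the paper writes $SD(G^{\varphi})S^{*}$, which differs only by replacing $S$ with $S^{*}$ and does not affect the similarity conclusion).
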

\begin{proof} Let $V=\{v_1,v_2,\cdots, v_n\}$ and $\zeta:V\rightarrow \mathbb{T}$ be the switching function such that $\zeta(v_j)=e^{i\theta_j}$ for $j=1, 2,\cdots, n$. Now the equalities $D^{\max}(G^{{\varphi}^\zeta})=D^{\min}(G^{{\varphi}^\zeta})=D(G^{{\varphi}^\zeta})$ follow easily from the above Theorem~\ref{switch0}. To prove the part dealing with the similarity, taking the diagonal matrix $S=\diag(e^{i\theta_j})_{n\times n}$, we get $D(G^{{\varphi}^\zeta})=SD(G^{\varphi})S^*$ where $S^*$ is the conjugate transpose of $S$ and in this case, $S^*=S^{-1}$. This completes the proof.
\end{proof}
Before we characterize balanced csgs using distance matrices, we need, first of all, some results from~\cite{shkcsg}, the concept of complete csgs associated with a given distance compatible csg $G^{\varphi}$ and the characterization of balance in these associated csgs.  It may also be noted that the distance matrices of a distance compatible csg $G^{\varphi}$ and its counter part $G^{|\varphi|}$ can be thought of as the adjacency matrices of the corresponding associate csgs $K_n^{\varphi'}$ and $K_n^{|\varphi'|}$ respectively where $\varphi'$ is the conjugate skew gain function defined by $\varphi'(\overrightarrow{uv})=\varphi_{(u,v)}d(u,v)$ built on the underlying complete graph $K_n$ and $n$ is the number of vertices or the order of $G$. Note that $G^{\varphi}$ is a spanning subgraph of $K_n^{\varphi'}$. Recall also that the adjacency matrix $A(G^{\varphi})=(a_{ij})$ of a csg $G^{\varphi}$ is defined~\cite{shkcsg} as:\\
$a_{ij} =
\left\{
\begin{array}{ll}
\varphi(\overrightarrow{v_iv_j})  & \mbox{if } v_i\sim v_j \\
0 & \mbox{otherwise }
\end{array}
\right.$ \\ such that whenever $a_{ij}\neq 0,  \ 
a_{ji}=\varphi(\overrightarrow{v_jv_i}) =\overline{a}_{ij}$.
\begin{thm}(\cite{shkcsg})\label{switchshk}
 A csg $G^{{\varphi}}$ is balanced if and only if it is switching equivalent to $G^{{|\varphi|}}$.
 \end{thm}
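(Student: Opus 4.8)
The plan is to prove the two implications separately, with essentially all of the work concentrated in the ``only if'' direction, which I would handle by building an explicit switching function off a spanning tree.

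For the easy direction, suppose $G^{\varphi}$ is switching equivalent to $G^{|\varphi|}$, say $\varphi^{\zeta}=|\varphi|$ for some switching function $\zeta:V\to\mathbb{T}$. First I would record the invariance of cycle gains under switching: for an oriented cycle $\overrightarrow{C}:v_0v_1\cdots v_nv_0$, the definition $\varphi^{\zeta}(\overrightarrow{uv})=\overline{\zeta(u)}\varphi(\overrightarrow{uv})\zeta(v)$ makes the factors $\zeta(v_i)\overline{\zeta(v_i)}=|\zeta(v_i)|^2=1$ telescope around the cycle, so $\varphi^{\zeta}(\overrightarrow{C})=\varphi(\overrightarrow{C})$. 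Since every edge gain of $G^{|\varphi|}$ is a positive real, $\varphi^{|\varphi|}(\overrightarrow{C})=\prod_{e\in E(C)}|\varphi(e)|=|\varphi(\overrightarrow{C})|$ is a positive real for each cycle; combined with the invariance this forces $\varphi(\overrightarrow{C})=|\varphi(\overrightarrow{C})|$, i.e.\ $G^{\varphi}$ is balanced.

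For the converse, assume $G^{\varphi}$ is balanced. I would fix a spanning tree $T$ of the connected graph $G$, root it at a vertex $r$, and for each vertex $v$ write $\overrightarrow{T}_{(r,v)}$ for the unique oriented tree path from $r$ to $v$. Then I would define $\zeta(v)=e^{-i\arg(\varphi(\overrightarrow{T}_{(r,v)}))}$ with $\zeta(r)=1$; this has modulus one, so it is a genuine map $V\to\mathbb{T}$, and it is well defined since $e^{-i\arg(\cdot)}$ does not depend on the chosen branch. A one-line computation on a tree edge $\overrightarrow{uv}$ (with $u$ the parent of $v$) gives $\arg(\varphi(\overrightarrow{T}_{(r,v)}))=\arg(\varphi(\overrightarrow{T}_{(r,u)}))+\arg(\varphi(\overrightarrow{uv}))$, whence $\overline{\zeta(u)}\varphi(\overrightarrow{uv})\zeta(v)$ has zero argument and therefore equals $|\varphi(\overrightarrow{uv})|$; thus every tree edge is already trivialized.

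The crux is to check the same equality on the off-tree edges, and this is exactly where balance is used. For a non-tree edge $\overrightarrow{uv}$, the paths $\overrightarrow{T}_{(r,u)}$ and $\overrightarrow{T}_{(r,v)}$ together with $\overrightarrow{uv}$ bound the fundamental cycle $\overrightarrow{C}$ that runs from $r$ to $u$ along the tree, across $uv$, and back from $v$ to $r$ along the tree reversed. Reading off arguments and using $\arg(\varphi(\overleftarrow{P}))=-\arg(\varphi(\overrightarrow{P}))$ yields $\arg(\varphi(\overrightarrow{C}))=\arg(\varphi(\overrightarrow{T}_{(r,u)}))+\arg(\varphi(\overrightarrow{uv}))-\arg(\varphi(\overrightarrow{T}_{(r,v)}))$, and balance says this is $2p\pi$. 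Rearranging shows precisely that $\overline{\zeta(u)}\varphi(\overrightarrow{uv})\zeta(v)$ has argument $\equiv 0\pmod{2\pi}$, so together with $|\zeta|\equiv 1$ it equals $|\varphi(\overrightarrow{uv})|$. Hence $\varphi^{\zeta}=|\varphi|$ on every edge, i.e.\ $G^{\varphi}$ switches to $G^{|\varphi|}$. I expect the main obstacle to be the bookkeeping of arguments modulo $2\pi$ and handling the orientation of the fundamental cycle consistently with the conjugation rule; everything else reduces to the routine telescoping already used in the easy direction.
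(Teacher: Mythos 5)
The paper does not actually prove this statement: it is imported verbatim from \cite{shkcsg} and used as a black box, so there is no in-text proof to compare against. Your argument is the standard spanning-tree ``potential function'' proof of such switching characterizations of balance, and it is essentially correct: the telescoping of $\zeta(v_i)\overline{\zeta(v_i)}=1$ around a cycle gives switching-invariance of cycle gains (hence the easy direction), and the tree-based $\zeta(v)=e^{-i\arg(\varphi(\overrightarrow{T}_{(r,v)}))}$ trivializes the arguments of all tree edges, with balance supplying the same conclusion on the chords.

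One point you should tighten: the ``fundamental cycle'' you invoke for a chord $uv$ --- from $r$ to $u$ along the tree, across $uv$, and back from $v$ to $r$ --- is in general a closed \emph{walk}, not a cycle, since $\overrightarrow{T}_{(r,u)}$ and $\overrightarrow{T}_{(r,v)}$ share an initial segment up to the least common ancestor $w$ of $u$ and $v$. The balance hypothesis applies only to genuine cycles, so you must observe that the shared segment is traversed once forward and once backward and therefore contributes $\varphi(\overrightarrow{T}_{(r,w)})\overline{\varphi(\overrightarrow{T}_{(r,w)})}=|\varphi(\overrightarrow{T}_{(r,w)})|^2$, a positive real; hence the argument of your closed walk agrees modulo $2\pi$ with that of the true fundamental cycle $w\to u\to v\to w$, to which balance does apply. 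With that one sentence added, the bookkeeping you flagged as the ``main obstacle'' closes completely and the proof stands.
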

\begin{thm}(\cite{shkcsg})\label{switchshk1}
 A csg $G^{{\varphi}}$ is balanced if and only if $A\big(G^{{|\varphi|}}\big)$ and $A\big(G^{{\varphi}}\big)$ have the same spectrum.
 \end{thm}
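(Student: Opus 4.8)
The statement to prove is Theorem~\ref{switchshk1}: a csg $G^{\varphi}$ is balanced if and only if $A\big(G^{|\varphi|}\big)$ and $A\big(G^{\varphi}\big)$ have the same spectrum. The plan is to route the whole argument through the switching characterization of balance already recorded as Theorem~\ref{switchshk}, which tells us that $G^{\varphi}$ is balanced exactly when it is switching equivalent to $G^{|\varphi|}$. The key observation is that switching equivalence implies a unitary similarity of adjacency matrices, and unitarily similar matrices are isospectral; this handles the forward direction cleanly.

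First I would prove the easy (``only if'') direction. Assume $G^{\varphi}$ is balanced. By Theorem~\ref{switchshk} there is a switching function $\zeta:V\rightarrow\mathbb{T}$ with $\zeta(v_j)=e^{i\theta_j}$ such that $G^{\varphi^{\zeta}}=G^{|\varphi|}$, i.e.\ $|\varphi|(\overrightarrow{v_iv_j})=\overline{\zeta(v_i)}\,\varphi(\overrightarrow{v_iv_j})\,\zeta(v_j)$ for all edges. Setting the diagonal unitary matrix $S=\diag(e^{i\theta_j})_{n\times n}$, this relation translates entrywise into the matrix identity $A\big(G^{|\varphi|}\big)=S^{*}A\big(G^{\varphi}\big)S$, exactly as in the similarity computation carried out in the proof of Theorem~\ref{switch2}. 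Since $S$ is unitary ($S^{*}=S^{-1}$ because $|\zeta(v_j)|=1$), the two adjacency matrices are similar and hence share the same spectrum.

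For the converse (``if'') direction, I would argue by relating the spectrum to the cycle structure rather than inverting the switching directly. Both $A\big(G^{\varphi}\big)$ and $A\big(G^{|\varphi|}\big)$ are Hermitian, so their spectra are real and determine, in particular, all coefficients of their characteristic polynomials; equal spectra force these characteristic polynomials to coincide. The standard approach is then to compare a spectrum-determined quantity that detects balance. The cleanest route is to observe that $G^{|\varphi|}$ has all cycle gains equal to their moduli (it is balanced by construction), so it realizes the ``maximal'' value of the relevant coefficient; one shows that for the general csg $G^{\varphi}$ the same coefficient of the characteristic polynomial is built from sums of real parts $\Re\big(\varphi(\overrightarrow{C})\big)$ over closed walks, and these are bounded above by the corresponding $|\varphi(\overrightarrow{C})|$ with equality precisely when every cycle gain is a nonnegative real, i.e.\ when $G^{\varphi}$ is balanced. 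Equality of spectra forces equality of that coefficient, which forces every cycle gain to be real and positive, yielding balance.

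The main obstacle will be making the converse rigorous, since equal spectra is a weaker hypothesis than switching equivalence and one cannot simply reverse the first direction. The delicate point is choosing the right coefficient (or moment $\operatorname{tr} A^k$) and proving the sharp inequality $\Re\big(\varphi(\overrightarrow{C})\big)\le |\varphi(\overrightarrow{C})|$ term by term over all closed walks contributing to that coefficient, together with the equality analysis that pins down balance. I would expect to invoke the fact that $G^{|\varphi|}$ is itself balanced (trivially, as its cycle gains are already nonnegative reals), so that Theorem~\ref{switchshk} applied to $G^{|\varphi|}$ gives a concrete comparison target; the isospectrality then propagates through the coefficient inequality to conclude that $G^{\varphi}$ must also be switching equivalent to $G^{|\varphi|}$, and hence balanced.
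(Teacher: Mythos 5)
The paper never proves this statement itself --- it is quoted from \cite{shkcsg} --- so there is no in-paper proof to compare against line by line. Your strategy is, however, exactly the one the paper deploys for the analogous distance-matrix characterization of balance: the forward direction via Theorem~\ref{switchshk} and the unitary diagonal similarity $A\big(G^{|\varphi|}\big)=S^{*}A\big(G^{\varphi}\big)S$ (with $S=\diag(\zeta(v_j))$, $S^{*}=S^{-1}$) is correct and complete, and the converse via the coefficient formula \eqref{eq2} together with the elementary inequality $\Re(z)\le |z|$, with equality iff $z$ is a nonnegative real, is the right idea.

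The one point you explicitly leave unresolved --- ``choosing the right coefficient'' --- is genuinely the crux, and it can be closed as follows. Work with the characteristic-polynomial coefficients rather than with $\operatorname{tr}A^{k}$ (closed walks that are not cycles make the trace route much messier), and let $i$ be the length of a \emph{shortest} unbalanced cycle. In $a_{i}$, every elementary subgraph of order $i$ other than a single $i$-cycle has all of its cycle components of length less than $i$, hence balanced, hence contributes identically to $a_{i}(G^{\varphi})$ and $a_{i}(G^{|\varphi|})$; the factors $|\varphi(e)|^{2}$ agree by construction. The remaining terms are single $i$-cycles, each carrying the same weight $(-1)^{1}2^{1}=-2$, so $a_{i}(G^{|\varphi|})-a_{i}(G^{\varphi})=-2\sum_{C}\big(|\varphi(C)|-\Re(\varphi(C))\big)$, a sum of nonnegative quantities at least one of which is strictly positive. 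No cancellation can occur, the coefficients differ, and cospectrality fails. Without the minimality of $i$, the varying signs $(-1)^{K(L)}$ and the mixture of balanced and unbalanced cycles could in principle cancel, so this selection cannot be skipped; it is precisely the device used in the paper's own proof of the corresponding distance-matrix theorem.
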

Now we characterize balance in a connected csg using its distance matrix. In the proof of the theorem we require the formula found in~\cite{shkcsg} for the coefficients in the characteristic polynomial of the adjacency matrix of a csg.
In Equation~\eqref{eq2} that follows, $\mathfrak{L}_{i}$ denotes the set of all elementary subgraphs of order $i$ where by an elementary subgraph we mean that subgraph of $G$ whose components either an edge $K_2$ or a cycle $C$. $C(L)$ denotes the number of cycles in an elementary subgraph $L$ and $K(L)$ denotes the number of components in that $L$.
\begin{lem}(\cite{shkcsg})\label{char} If $G^{\varphi}$ is a csg where $G=(V,\ E)$ is a graph of order $n$ and if $\det\big(xI-A(G^{\varphi})\big)=\sum\limits_{i=0}^{n}a_i(G^{\varphi})x^{n-i}$ is the characteritic polynomial, then
\begin{equation}\label{eq2} a_i(G^{\varphi})=\sum\limits_{L\in\mathfrak{L}_{i}}(-1)^{K(L)}2^{C(L)}\Big(\prod\limits_{K_2\in L}\prod\limits_{e\in K_2}|\varphi(e)|^2\Big)\Big(\prod\limits_{C\in L}\Re(\varphi(C))\Big)
\end{equation}
\end{lem}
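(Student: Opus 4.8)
The plan is to prove this as the conjugate skew gain analogue of the classical Sachs coefficient theorem, expanding each coefficient through the principal minors of the Hermitian matrix $A=A(G^{\varphi})$ (Hermitian because $a_{ji}=\overline{a_{ij}}$). First I would invoke the standard identity linking the characteristic polynomial to sums of principal minors: since $\det(xI-A)=\prod_{j=1}^{n}(x-\lambda_j)$, the coefficient of $x^{n-i}$ is
\[
a_i(G^{\varphi})=(-1)^i\sum_{\substack{S\subseteq V\\ |S|=i}}\det\big(A[S]\big),
\]
where $A[S]$ is the principal submatrix indexed by the vertex set $S$. The whole task then reduces to evaluating each $\det(A[S])$ combinatorially and summing over $S$.

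Next I would expand $\det(A[S])$ via the Leibniz formula $\det(A[S])=\sum_{\sigma\in\mathrm{Sym}(S)}\mathrm{sgn}(\sigma)\prod_{v\in S}a_{v\,\sigma(v)}$ and isolate the surviving permutations. Because $G$ is simple, the diagonal entries $a_{vv}$ vanish, so any $\sigma$ fixing a vertex contributes $0$; hence only fixed-point-free $\sigma$ matter, and for these every factor $a_{v\,\sigma(v)}$ being nonzero forces $v\sim\sigma(v)$. Consequently the cycle decomposition of $\sigma$ traces a spanning elementary subgraph $L$ of $S$: each $2$-cycle $(u\,v)$ is an edge component $K_2$, and each cycle of length $\geq 3$ is a cycle component $C$. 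Running over all $S$ with $|S|=i$ and all spanning elementary subgraphs of each $S$ is precisely running over all $L\in\mathfrak{L}_i$, since such an $L$ both determines and exhausts its own vertex set.

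I would then compute the sign and the weight attached to each $L$. A permutation that is a disjoint product of $K(L)$ cycles covering $i$ vertices satisfies $\mathrm{sgn}(\sigma)=(-1)^{i-K(L)}$, so the outer $(-1)^i$ collapses the sign to $(-1)^{K(L)}$. For the weights, a $2$-cycle on an edge $e=uv$ contributes $a_{uv}a_{vu}=\varphi(\overrightarrow{uv})\,\overline{\varphi(\overrightarrow{uv})}=|\varphi(e)|^2$, again using $a_{ji}=\overline{a_{ij}}$; and a cycle component $C$ of length $\geq 3$ is realized by \emph{exactly two} permutation cycles, its two orientations, whose products $\varphi(\overrightarrow{C})$ and $\varphi(\overleftarrow{C})=\overline{\varphi(\overrightarrow{C})}$ add to $\varphi(\overrightarrow{C})+\overline{\varphi(\overrightarrow{C})}=2\Re(\varphi(C))$. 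Thus each cycle component supplies one factor $2$ and one factor $\Re(\varphi(C))$, giving $2^{C(L)}$ overall. Assembling these pieces and summing over $\mathfrak{L}_i$ reproduces Equation~\eqref{eq2}.

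The main obstacle is the bookkeeping in this last step: correctly pairing the two orientations of each long cycle (where the axiom $\varphi(\overleftarrow{C})=\overline{\varphi(\overrightarrow{C})}$ enters to produce the real part) and checking that the per-cycle signs $(-1)^{\ell-1}$ multiply to the clean global sign $(-1)^{K(L)}$ after combining with $(-1)^i$. Everything else—the minor expansion and the bijection with elementary subgraphs—is the standard Sachs machinery; the only genuinely csg-specific input is that $A$ is Hermitian, which is exactly what turns the orientation sum into $2\Re(\varphi(C))$ and, as a byproduct, guarantees that the coefficients $a_i$ are real.
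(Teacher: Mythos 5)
Your argument is correct: the reduction to principal minors, the restriction to fixed-point-free permutations, the sign computation $(-1)^i(-1)^{i-K(L)}=(-1)^{K(L)}$, and the pairing of the two orientations of each long cycle via $\varphi(\overleftarrow{C})=\overline{\varphi(\overrightarrow{C})}$ to produce $2\Re(\varphi(C))$ are all sound. Note, however, that this paper does not prove the lemma at all --- it is imported from the cited reference \cite{shkcsg} --- so there is no in-paper proof to compare against; your derivation is the standard Sachs/Harary coefficient argument adapted to the Hermitian adjacency matrix, which is exactly what the statement's form (real coefficients, $|\varphi(e)|^2$ on edge components, $\Re(\varphi(C))$ on cycle components) calls for.
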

\begin{thm}\label{reduced1} A modulus-wise distance compatible csg $G^{\varphi}$ is balanced if and only if the associated csg $K_n^{\varphi'}$ is balanced.
\end{thm}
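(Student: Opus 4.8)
The plan is to prove the two implications separately, all the time viewing $G^{\varphi}$ as a spanning subgraph of $K_n^{\varphi'}$ whose gains agree with those of $\varphi'$ on the edges of $G$. I would dispose of the easy direction first: \emph{balance of $K_n^{\varphi'}$ forces balance of $G^{\varphi}$}. For every edge $uv\in E(G)$ one has $d(u,v)=1$ and, since $G$ is simple, the edge itself is the unique shortest $u$-$v$ path, so $\varphi'(\overrightarrow{uv})=\varphi_{(u,v)}\,d(u,v)=\varphi(\overrightarrow{uv})$; note this value is unambiguous no matter which convention is used to define $\varphi'$ on the non-adjacent pairs. Consequently every oriented cycle $\overrightarrow{C}$ of $G$ is an oriented cycle of $K_n$ with $\varphi'(\overrightarrow{C})=\varphi(\overrightarrow{C})$, and if $K_n^{\varphi'}$ is balanced then each such $\varphi(\overrightarrow{C})$ is a positive real, whence $G^{\varphi}$ is balanced.

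For the converse I would first record that $K_n^{\varphi'}$ is genuinely well defined under the hypotheses: by Theorem~\ref{bimdist} a balanced csg is argument-wise distance compatible, and together with the assumed modulus-wise distance compatibility this makes $G^{\varphi}$ distance compatible, so the common value $\varphi_{(u,v)}$ exists for every pair and $\varphi'$ is unambiguous. The key step is then the claim that \emph{in a balanced csg every closed walk has positive real gain}. To prove it I would use Theorem~\ref{switchshk} to switch $G^{\varphi}$ to $G^{|\varphi|}$, and observe that the gain of any closed walk $W:u_1u_2\cdots u_mu_1$ is switching invariant, because the switching scalars telescope to $\prod_j\overline{\zeta(u_j)}\zeta(u_j)=\prod_j|\zeta(u_j)|^2=1$ (indices read cyclically, $u_{m+1}:=u_1$). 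In $G^{|\varphi|}$ every edge carries a positive real gain, so the gain of $W$ computed there is a product of positive reals, and by invariance the original $\varphi(\overrightarrow{W})$ is positive real as well.

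With this claim in hand the converse finishes quickly. Given any oriented cycle $\overrightarrow{C}:u_1u_2\cdots u_mu_1$ of $K_n$, I would realize each gain $\varphi_{(u_j,u_{j+1})}$ by a shortest $u_j$-$u_{j+1}$ path $P_j$ in $G$ and concatenate the $P_j$ into a closed walk $W$ of $G$, so that $\varphi(\overrightarrow{W})=\prod_j\varphi_{(u_j,u_{j+1})}$. Since $\varphi'(\overrightarrow{C})=\prod_j\varphi_{(u_j,u_{j+1})}\,d(u_j,u_{j+1})=\varphi(\overrightarrow{W})\prod_j d(u_j,u_{j+1})$ is the product of the positive real $\varphi(\overrightarrow{W})$ with the positive integers $d(u_j,u_{j+1})$, it is a positive real. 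Hence every oriented cycle of $K_n^{\varphi'}$ is balanced, and so $K_n^{\varphi'}$ is balanced.

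I expect the main obstacle to lie in the converse, and specifically in the passage from cycles of $K_n$ to structures inside $G$: a cycle of $K_n$ generally pulls back only to a \emph{closed walk} of $G$ that may retrace edges, not to a genuine cycle, so one cannot argue cycle by cycle directly. The device that removes this obstacle is the switching-invariance of closed-walk gains together with Theorem~\ref{switchshk}, which silently absorbs every retraced edge into a positive real factor $|\varphi(e)|^2$ and reduces the whole question to the trivially positive reals living in $G^{|\varphi|}$. A secondary point worth stating explicitly is the well-definedness of $K_n^{\varphi'}$, which is exactly where the modulus-wise hypothesis gets upgraded, via Theorem~\ref{bimdist}, to the full distance compatibility needed to speak of $\varphi_{(u,v)}$.
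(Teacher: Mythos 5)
Your proof is correct, and in the converse direction it takes a noticeably different (and more explicit) route than the paper. The paper's converse is a two-line assertion: the switching function $\zeta$ that carries $G^{\varphi}$ to $G^{|\varphi|}$ (which exists by Theorem~\ref{switchshk}) ``will also definitely switch $K_n^{\varphi'}$ to $K_n^{|\varphi'|}$,'' whence $K_n^{\varphi'}$ is balanced by switching equivalence. You instead stay inside $G$: you prove that in a balanced csg every \emph{closed walk} has positive real gain (via the switching invariance of closed-walk gains and Theorem~\ref{switchshk}), and then pull each cycle of $K_n$ back to a closed walk of $G$ obtained by concatenating shortest paths, so that $\varphi'(\overrightarrow{C})$ is a positive real times a product of positive integer distances. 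The two arguments rest on the same mechanism --- switch to $G^{|\varphi|}$ and watch everything become positive real --- but your closed-walk lemma is exactly the computation needed to justify the paper's unproved assertion that $\zeta$ sends $\varphi'$ to $|\varphi'|$ (namely, $\overline{\zeta(u)}\varphi_{(u,v)}\zeta(v)=|\varphi_{(u,v)}|$ because it is the $\varphi^{\zeta}$-gain of a shortest path), so your version buys a self-contained verification at the cost of a little more writing. In the easy direction you are also more careful than the paper, which says only that $G^{\varphi}$ ``is a subgraph'' of $K_n^{\varphi'}$; your observation that $d(u,v)=1$ and the edge is the unique shortest path, so the two gain functions actually agree on $E(G)$, is the point that makes that remark legitimate. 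Your closing comment on well-definedness of $\varphi_{(u,v)}$ (modulus-wise compatibility plus balance yielding full distance compatibility via Theorem~\ref{bimdist}) addresses a hypothesis the paper leaves implicit.
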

\begin{proof} Assume first that $K_n^{\varphi'}$ is balanced. This immediately gives the fact that $G^{\varphi}$ is balanced as it is a subgraph of  $K_n^{\varphi'}$. Conversely assume that the modulus-wise distance compatible csg $G^{\varphi}$ is balanced. Then the switching function $\zeta$ which switches $G^{\varphi}$ to $G^{|\varphi|}$ will also definitely switch $K_n^{\varphi'}$ to $K_n^{|\varphi'|}$. Hence by Theorem~\ref{switchshk1}, the proof is complete.
\end{proof}
\begin{thm}\label{char}
A connected csg $G^{\varphi}$ is balanced and modulus-wise distance compatible if and only if $D(G^{\varphi})$ exists and it is cospectral with $D(G^{|\varphi|})$.
\end{thm}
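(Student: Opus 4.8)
The plan is to reduce this statement about the distance matrices of $G^{\varphi}$ to a statement about the adjacency matrices of the associated complete csgs $K_n^{\varphi'}$ and $K_n^{|\varphi'|}$, so that Theorems~\ref{switchshk1} and~\ref{reduced1} apply directly. The crucial preliminary observation, already recorded in the discussion preceding this theorem, is that once $G^{\varphi}$ is distance compatible one has $D(G^{\varphi})=A(K_n^{\varphi'})$ and, likewise, $D(G^{|\varphi|})=A(K_n^{|\varphi'|})$. I would first justify why these identifications are legitimate: the assignment $\varphi'(\overrightarrow{uv})=\varphi_{(u,v)}d(u,v)$ really is a conjugate skew gain function on $K_n$, since $d(u,v)$ is real and $\varphi_{(v,u)}=\overline{\varphi_{(u,v)}}$ (reversing a shortest path conjugates its gain), and moreover $|\varphi'(\overrightarrow{uv})|=|\varphi_{(u,v)}|\,d(u,v)$ is exactly the gain function used to build $K_n^{|\varphi'|}$ from $G^{|\varphi|}$. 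This last equality is the technical heart of the identification.

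Next I would unpack the phrase ``$D(G^{\varphi})$ exists''. By the construction in Section~\ref{sec1}, the equality $D^{\max}(G^{\varphi})=D^{\min}(G^{\varphi})$ holds precisely when all shortest oriented paths between each pair of vertices share a single complex gain, i.e. precisely when $G^{\varphi}$ is distance compatible, which is the same as being both argument-wise \emph{and} modulus-wise distance compatible. I would also note that modulus-wise compatibility of $G^{\varphi}$ alone forces $G^{|\varphi|}$ to be distance compatible, so $D(G^{|\varphi|})$ is automatically well defined throughout.

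For the forward direction, suppose $G^{\varphi}$ is balanced and modulus-wise distance compatible. Theorem~\ref{bimdist} supplies argument-wise distance compatibility, so $G^{\varphi}$ is distance compatible and $D(G^{\varphi})$ exists. Theorem~\ref{reduced1} then transfers the balance of $G^{\varphi}$ to balance of $K_n^{\varphi'}$, and applying Theorem~\ref{switchshk1} to the balanced csg $K_n^{\varphi'}$ gives that $A(K_n^{\varphi'})$ and $A(K_n^{|\varphi'|})$ have the same spectrum; translating back through the identifications of the first paragraph yields that $D(G^{\varphi})$ and $D(G^{|\varphi|})$ are cospectral.

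The converse runs the same chain in reverse. If $D(G^{\varphi})$ exists then $G^{\varphi}$ is distance compatible, hence in particular modulus-wise distance compatible. Cospectrality of $D(G^{\varphi})$ and $D(G^{|\varphi|})$ is, via the same identifications, cospectrality of $A(K_n^{\varphi'})$ and $A(K_n^{|\varphi'|})$, whence Theorem~\ref{switchshk1} forces $K_n^{\varphi'}$ to be balanced, and Theorem~\ref{reduced1} carries this balance back to $G^{\varphi}$. The main obstacle I anticipate is not any single implication but the bookkeeping of that very first step: verifying cleanly that $\varphi'$ is a conjugate skew gain function and that $K_n^{|\varphi'|}$ coincides with the complete csg associated to $G^{|\varphi|}$, so that Theorem~\ref{switchshk1} is genuinely being applied to the matched pair $A(K_n^{\varphi'}),\,A(K_n^{|\varphi'|})$ rather than to two merely similar-looking matrices.
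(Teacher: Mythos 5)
Your proof is correct, and it shares the paper's overall framework (passing to the associated complete csgs $K_n^{\varphi'}$ and $K_n^{|\varphi'|}$ and identifying $D(G^{\varphi})=A(K_n^{\varphi'})$, $D(G^{|\varphi|})=A(K_n^{|\varphi'|})$), but both halves are executed along a different route. In the forward direction the paper does not go through $K_n^{\varphi'}$ at all: it invokes Theorem~\ref{switchshk} to get that $G^{\varphi}$ is switching equivalent to $G^{|\varphi|}$ and then observes (as in Theorem~\ref{switch2}) that the same diagonal unitary $S=\diag(\zeta(v_i))$ conjugates $D(G^{|\varphi|})$ into $D(G^{\varphi})$, giving similarity and hence cospectrality directly; your version instead chains Theorem~\ref{reduced1} with Theorem~\ref{switchshk1}, which is equally valid. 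In the converse the divergence is more substantial: where you apply the ``cospectral $\Rightarrow$ balanced'' direction of Theorem~\ref{switchshk1} to $K_n^{\varphi'}$ as a black box, the paper essentially re-derives that implication for the complete csg by hand, using the coefficient formula of Lemma~\ref{char} (Equation~\eqref{eq2}) to exhibit a coefficient $a_i$ of the characteristic polynomial at which $K_n^{\varphi'}$ and $K_n^{|\varphi'|}$ differ when an unbalanced cycle of minimal length $i$ exists. Your route is shorter and cleaner given that Theorem~\ref{switchshk1} is stated as a full equivalence; the paper's buys a self-contained spectral argument that makes visible \emph{which} coefficient detects the imbalance. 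You are also right to flag the identification $D(G^{|\varphi|})=A(K_n^{|\varphi'|})$ as the point needing care --- it rests on the fact that shortest paths depend only on the underlying graph and that modulus-wise compatibility of $G^{\varphi}$ makes $|\varphi_{(u,v)}|$ well defined --- and your verification of it (including that $\varphi'$ is genuinely a conjugate skew gain function because $d(u,v)$ is real and reversal conjugates path gains) fills in a step the paper leaves implicit.
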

\begin{proof} Assume first that $G^{\varphi}$ is balanced and modulus-wise distance compatible. Thus by Theorem~\ref{bimdist}, it is distance compatible and as such the matrix $D(G^{\varphi})$ exists. As $G^{\varphi}$ is balanced, by Theorem~\ref{switchshk}, it is switching equivalent to $G^{|\varphi|}$. This  means $D(G^{\varphi})$ is similar to $D(G^{|\varphi|})$ and hence they are cospectral. In fact, if $S=\diag(\zeta(v_i))$, the diagonal matrix corresponding to the switching function $\zeta$, then $D(G^{\varphi})=S^{-1}D(G^{|\varphi|})S$.\\
Conversely assume that $D(G^{\varphi})$ exists and it is cospectral with $D(G^{|\varphi|})$. As $D(G^{\varphi})$ exists, $G^{\varphi}$ must be distance compatible and hence modulus-wise distance compatible. So it remains to prove that $G^{\varphi}$ is balanced. Due to Lemma~\ref{reduced1}, it is enough to prove that $K_n^{\varphi'}$ is balanced. Assume on the contrary, let there be unbalanced cycle or cycles of shortest length, say $i\ge 3$, in $K_n^{\varphi'}$ which form part of some elementary subgraph of order $i$. Denoting the collection of such cycles as $\mathfrak{C}_i\subseteq \mathfrak{L}_i $ and noting the fact that other terms for $a_i(K_n^{|\varphi'|})$ and $a_i(K_n^{\varphi'})$ being equal, from Equation~\eqref{eq2},
\begin{align*} a_i(K_n^{|\varphi'|})-a_i(K_n^{\varphi'})&=(-2)\sum\limits_{C\in\mathfrak{C}_{i}}\Big(|\varphi'(C)|-\Re(\varphi'(C))\Big)\\
&\neq 0
\end{align*}
This contradiction to cospectrality of the adjacent matrices of $K_n^{\varphi'}$ and $K_n^{|\varphi'|}$, by Theorem~\ref{switchshk1}, completes the proof.
\end{proof}
Here is an example of a balanced csg illustrating Theorem~\ref{char}. 
\begin{figure}[h!]
	\centering
	\begin{tikzpicture}[x=0.85cm, y=0.855cm]
		\node[vertex, label=left:$v_1$] (u) at (0,0) {};
		\node[vertex, label=right:$v_2$] (v) at (4,0) {};
		\node[vertex, label=left:$v_4$] (w) at (0,4) {};
		\node[vertex, label=right:$v_3$] (x) at (4,4) {};
		
		\path[midarrow, thick, draw] (u) to node[below]{$e_1\ (1+i)$} (v);
		\path[midarrow, thick, draw]  (v) to node[right]{$e_2\ (i)$} (x);
		\path[midarrow, thick, draw]  (x) to node[above]{$e_3\ (1-i)$} (w);
		\path[midarrow, thick, draw]  (w) to node[left]{$e_4\ (-i)$} (u);
		\path[midarrow, thick, draw]  (u) to node[right,pos=0.075mm]{$e_5\ (-1+i)$} (x);
	\end{tikzpicture}
	\caption{Example to illustrate Theorem~\ref{char}; edge skew gains in the brackets}
	\label{fig4}
\end{figure}
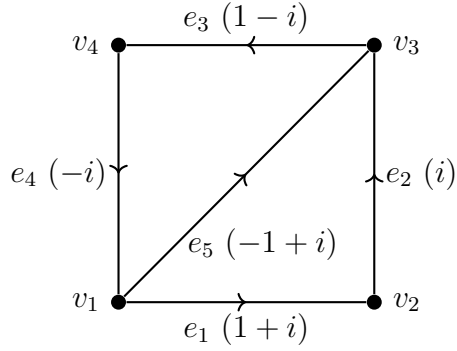
\\ In fact, for the csg in Figure~\ref{fig4}, the distance matrices $D(G^{\varphi})$ and $D(G^{|\varphi|})$ are as follows. 
$$D(G^{\varphi})=\begin{pmatrix}0 & 1+i & -1+i & i\\   1-i & 0 & i & 2(1+i)\\-(1+i) &  -i & 0 & 1-i\\-i & 2(1-i) & 1+i & 0\end{pmatrix}$$
$$D(G^{|\varphi|})=\begin{pmatrix}0 & \sqrt{2} & \sqrt{2} & 1\\\sqrt{2} & 0 & 1 & 2\sqrt{2}\\ \sqrt{2} &  1 & 0 & \sqrt{2}\\1 & 2\sqrt{2} & \sqrt{2}& 0 \end{pmatrix}$$

Their common characteristic polynomial is $\lambda^4-16\lambda^2-24\lambda-7$ and the common spectra is
 $\begin{pmatrix}\lambda_1&\lambda_2 &\lambda_3& \lambda_4 \\ 1 & 1 & 1& 1 \end{pmatrix}$ where $\lambda_1=-\lambda_2=\big(-\sqrt{14-2^{7/2}}-3\sqrt{2}\big)/2$ and $\lambda_3=-\lambda_4=\big(-\sqrt{14-2^{7/2}}+3\sqrt{2}\big)/2$.
\section{Distance spectra of certain csgs}
In this section, we compute the eigenvalues of distance matrices of certain csgs which are distance compatible. First we consider a csg built on the odd cyle $C_n$ where $n=2p+1$ and the conjugate skew gains are chosen from the circle of radius $k$ in the complex plane with center at the origin. i.e., $|\varphi(\overrightarrow{e})|=k$ for all the edges of the cycle. 
First we have the following lemma which we use to find the sum in the computation of distance spectrum.
\begin{lem}\label{sum}
$$\sum\limits_{r=1}^{p}rk^r\cos(r\theta)=\dfrac{f(\theta)}{g(\theta)}$$\\ 
where 
\begin{align*}
    \begin{split}
        f(\theta) &= pk^{p+2}\cos(p+2)\theta - k^{p+1}\big(p(2k^2+1)+1\big)\cos(p+1)\theta \\
        &\quad + k^{p+2}\big(p(k^2+2)+2\big)\cos p\theta  -(p+1)k^{p+3}\cos(p-1)\theta + k(k^2+1)\cos\theta - 2k^2\\
    \end{split} 
\end{align*}
 and $g(\theta)=(1-2k\cos\theta+k^2)^2$
\end{lem}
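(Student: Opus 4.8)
The plan is to pass to complex exponentials and recognise the sum as the real part of a derivative of a finite geometric series. Writing $z=ke^{i\theta}$, so that $|z|=k$ and $z\bar z=k^2$, we have $rk^r\cos(r\theta)=\Re(rz^r)$, and hence
$$\sum_{r=1}^{p} rk^r\cos(r\theta)=\Re\left(\sum_{r=1}^{p} r z^r\right).$$
The whole difficulty is thus moved to finding a closed form for $\sum_{r=1}^{p} r z^r$ and then extracting its real part.

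For the closed form I would use the standard trick $\sum_{r=1}^{p} r z^r=z\frac{d}{dz}\sum_{r=0}^{p} z^r=z\frac{d}{dz}\frac{1-z^{p+1}}{1-z}$. Differentiating the quotient gives
$$\sum_{r=1}^{p} r z^r=\frac{z\big(1-(p+1)z^p+p z^{p+1}\big)}{(1-z)^2}.$$
One could alternatively verify this identity by induction on $p$, but the differentiation route is cleaner and makes the structure of the answer transparent.

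To extract the real part I would clear the complex denominator by multiplying numerator and denominator by $(1-\bar z)^2$, so that the denominator becomes $|1-z|^4=\big((1-z)(1-\bar z)\big)^2=(1-2k\cos\theta+k^2)^2=g(\theta)$, which is exactly the claimed denominator. The numerator then becomes $\Re\big(z(1-\bar z)^2(1-(p+1)z^p+p z^{p+1})\big)$. Here the key simplification is that $z(1-\bar z)^2=z-2k^2+k^2\bar z$ after using $z\bar z=k^2$, and likewise $\bar z z^m=k^2 z^{m-1}$; these collapse every product into a single power $z^m$, whose real part is simply $k^m\cos(m\theta)$.

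What remains is purely bookkeeping: multiply out, replace each $\Re(z^m)$ by $k^m\cos(m\theta)$, and collect the coefficients of $\cos((p+2)\theta)$, $\cos((p+1)\theta)$, $\cos(p\theta)$, $\cos((p-1)\theta)$, $\cos\theta$ and the constant term. I expect this collection step to be the only real obstacle, since it is easy to drop a factor of $k$ or misassign a coefficient; but each coefficient should fall out exactly as in $f(\theta)$. For instance, the $\cos((p+1)\theta)$ coefficient combines the $-(p+1)k^{p+1}$ coming from the $z^p$ term with the $-2pk^{p+3}$ coming from the $z^{p+1}$ term, giving $-k^{p+1}\big(p(2k^2+1)+1\big)$, precisely as claimed. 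Matching all six groups against the stated $f(\theta)$ then completes the proof.
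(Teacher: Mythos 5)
Your proposal is correct and follows essentially the same route as the paper: both write the sum as $\Re\bigl(\sum_{r=1}^{p} r z^r\bigr)$ with $z=ke^{i\theta}$, obtain the closed form $\bigl(z-(p+1)z^{p+1}+pz^{p+2}\bigr)/(1-z)^2$ (the paper via the arithmetico-geometric summation, you via differentiating the geometric series --- an immaterial difference), multiply through by $(1-\bar z)^2$ to produce the real denominator $g(\theta)$, and use $z\bar z=k^2$ to collapse the numerator into single powers of $z$ before taking real parts. Your sample coefficient check for $\cos((p+1)\theta)$ matches the paper's expansion, so the remaining bookkeeping goes through exactly as you describe.
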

\begin{proof} Let $S=\sum\limits_{r=1}^{p}rk^r\cos(r\theta)$. Then 
\begin{align*} S&=\frac{1}{2}\sum\limits_{r=1}^{p}\Big[rk^r\big(e^{ir\theta}+e^{-ir\theta} \big)\Big]
                       =\frac{1}{2}[S_1+\overline{S_1}]
                       =\Re(S_1)
\end{align*}
where, by choosing $z=ke^{i\theta},\ S_1=\sum\limits_{r=1}^{p}rz^r$. Thus using arithmetico geometric series,
\begin{align*} S_1&=\dfrac{z(1-z^p)}{(1-z)^2}-\dfrac{p\ z^{p+1}}{1-z}
                          =\dfrac{z-(p+1)z^{p+1}+pz^{p+2}}{(1-z)^2}
\end{align*}
Multiplying by conjugate of the denominator and simplifying further using the fact that $|z|=k$,
\begin{align*} S_1&=\dfrac{\Big(z-(p+1)z^{p+1}+pz^{p+2}\Big)(1-\overline{z})^2}{(1-2k\cos\theta+k^2)^2}\\
                          &=\dfrac{\Big(z-(p+1)z^{p+1}+pz^{p+2}\Big)(1-2\overline{z}+(\overline{z})^2)}{(1-2k\cos\theta+k^2)^2}\\
                          &=\dfrac{\Big(z-2k^2+k^2\overline{z}-((p+1)+2pk^2)z^{p+1}+pz^{p+2}+(2(p+1)k^2+pk^4)z^{p}-(p+1)k^4z^{p-1}\Big)}{(1-2k\cos\theta+k^2)^2}
\end{align*}
This simplifies to the required sum $\Re(S_1)=\dfrac{f(\theta)}{g(\theta)}$ as desired.
\end{proof}
\begin{thm}
	For an odd unbalanced cycle $C_n^{\varphi}$ where $n=2p+1$ with $\theta=\arg(\varphi(C_n^\varphi))$  the distance spectrum is given by: 
	$$\lambda_j=\dfrac{2f(\theta_j)}{g(\theta_j)}$$ where $\theta_j=\frac{{(2\pi j+\theta)}}{n}, j=0,1,\cdots, n-1$ and the functions $f$ and $g$ are as in Lemma~\ref{sum}.
\end{thm}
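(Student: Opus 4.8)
The plan is to reduce the distance matrix $D(C_n^\varphi)$ to a Hermitian circulant matrix by a suitable switching, and then to read off its eigenvalues from the standard circulant formula, converting the resulting exponential sum into the closed form supplied by Lemma~\ref{sum}. Before anything else I would note that $D(C_n^\varphi)$ is well defined: since $n=2p+1$ is odd, the two arcs of the cycle joining any pair of vertices have different lengths, so the shortest oriented path between them is unique. Hence $C_n^\varphi$ is automatically distance compatible and $D(C_n^\varphi)=\big(\varphi_{(u,v)}d(u,v)\big)$ makes sense.

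The key reduction is to switch to a canonical csg in which every oriented edge carries one and the same gain. Writing $\varphi(\overrightarrow{v_iv_{i+1}})=ke^{i\beta_i}$ (all moduli equal $k$ by hypothesis), the telescoping system $\gamma_{i+1}-\gamma_i=\theta/n-\beta_i$ is solvable precisely because $\sum_i\beta_i\equiv\theta\pmod{2\pi}$; the switching function $\zeta(v_i)=e^{i\gamma_i}\in\mathbb{T}$ then transforms every forward edge gain to $\omega:=ke^{i\theta/n}$. By Theorem~\ref{switch2} this alters $D$ only by a unitary similarity, so the distance spectrum is unchanged, and I may assume henceforth that $\varphi(\overrightarrow{v_iv_{i+1}})=\omega$ for every $i$.

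With this normalization the distance matrix is circulant. Indexing the vertices $v_0,\dots,v_{n-1}$ cyclically, the unique shortest path from $v_a$ to $v_{a+s}$ runs forward through $s$ edges when $1\le s\le p$, contributing the entry $s\omega^s$, and backward through $n-s$ edges when $p+1\le s\le n-1$, contributing $(n-s)\overline{\omega}^{\,n-s}$; in either case the entry depends only on $s=(b-a)\bmod n$. Thus $D_{ab}=c_{(b-a)\bmod n}$ with $c_0=0$, with $c_s=s\omega^s$ for $1\le s\le p$, and with $c_{n-s}=s\overline{\omega}^{\,s}=\overline{c_s}$ for $1\le s\le p$. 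The relation $c_{n-s}=\overline{c_s}$ confirms that $D$ is Hermitian, as it must be since reversing a path conjugates its gain, and so its spectrum is real.

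Finally I would invoke the circulant eigenvalue formula $\lambda_j=\sum_{s=0}^{n-1}c_s\zeta^{js}$ with $\zeta=e^{2\pi i/n}$ and $j=0,1,\dots,n-1$. Pairing the term at index $s$ with the term at index $n-s$ and writing $\omega^s\zeta^{js}=k^se^{is\theta_j}$ for $\theta_j=(2\pi j+\theta)/n$, each pair collapses via $e^{is\theta_j}+e^{-is\theta_j}=2\cos(s\theta_j)$ to $2sk^s\cos(s\theta_j)$, giving $\lambda_j=2\sum_{s=1}^{p}sk^s\cos(s\theta_j)$. Substituting the closed form of Lemma~\ref{sum} then yields $\lambda_j=2f(\theta_j)/g(\theta_j)$, exactly as claimed. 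I expect the only delicate point to be the switching reduction: one must check both that all edge gains can be genuinely normalized to the common value $\omega$ and that Theorem~\ref{switch2} legitimately transfers the spectrum through the similarity; once that is secured, the circulant computation and the appeal to Lemma~\ref{sum} are routine.
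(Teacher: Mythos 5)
Your proof is correct and follows essentially the same route as the paper: reduce by a switching (justified via Theorem~\ref{switch2}), exploit the cyclic structure to diagonalize with the vectors $[\rho,\rho^2,\ldots,\rho^n]^T$, arrive at $\lambda_j=2\sum_{r=1}^{p}rk^r\cos(r\theta_j)$, and finish with Lemma~\ref{sum}. The only difference is cosmetic: you spread the phase uniformly so that every edge carries $ke^{i\theta/n}$ and $D$ becomes a genuine Hermitian circulant (which also hands you a complete eigenbasis for free), whereas the paper concentrates the phase $e^{i\theta}$ on a single edge and derives the constraint $\rho^n=e^{i\theta}$ from the eigenvector ansatz.
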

\begin{proof}
Let the given cycle $C_n$ be $v_1e_1v_2e_2v_3\cdots v_{n}e_nv_1$. By properly switching, without loss of generality, the chosen csg $C_n^\varphi$ can be thought of as the one with $\varphi(e_n)=ke^{i\theta}$ where $\theta=\arg(\varphi(C_n^\varphi))$ and $\varphi(e_j)=k$ for the remaining edges. Using Theorem~\ref{switch0}, it is enough to compute the distance spectra of this switched cycle. The distance matrix $D(C_n^\varphi)$ can be written as\\
$\begin{pmatrix}
	0 & k & 2k^2 & \cdots pk^p & pk^pe^{-i\theta} & (p-1)k^{p-1}e^{-i\theta} \cdots & 2k^2e^{-i\theta} & ke^{-i\theta}\\
	k & 0 & k & 2k^2 \cdots \ \ \ \  pk^p & pk^pe^{-i\theta} & \cdots & \cdots & 2k^2e^{-i\theta}\\
	\cdots & \cdots & \cdots & \cdots \cdots &\cdots& \cdots & \cdots &\cdots\\
	\cdots & \cdots & \cdots & \cdots \cdots &\cdots& \cdots & \cdots &\cdots\\	 
	ke^{i\theta} & 2k^2e^{i\theta} & \cdots & \cdots   &\cdots & \cdots& k& 0
\end{pmatrix}
$

The matrix being a real symmetric one, its eigenvalues are real.	
To find the eigenvalues we first choose an eigenvector corresponding to the eigenvalue $\lambda$ as $X=[\rho,\rho^2,\ldots,\rho^{n}]^T$, where $\rho$ is a complex number to be found out. The eigenalue-eigenvector equation $D(C_n^\varphi)X=\lambda X$ gives $\lambda=\sum\limits_{r=1}^{p}r\rho^{r}+e^{-i\theta}\sum\limits_{r=1}^{p}r\rho^{n-r}$. Putting this value in any of the remaining equation, corresponding to the system of equations for $D(C_n^\varphi)X=\lambda X$, will give us $\rho^n=e^{i\theta}$ so that $\rho=e^{\frac{{(2\pi j+\theta)}i}{n}}$ for $j=0,1,\cdots, n-1$. Thus the eigenvalues are 
\begin{align*}
\lambda_{j}&=\sum\limits_{r=1}^{p}rk^r\rho^{r}+ e^{-i\theta}\sum\limits_{r=1}^{p}rk^r\rho^{n-r}\\
                &=\sum\limits_{r=1}^{p}rk^r\big(\rho^{r}+\rho^{-r}\big)\\
                &=2\sum\limits_{r=1}^{p}rk^r\cos{(r\theta_{j})}
\end{align*}
where $\theta_j=(2\pi j+\theta)/n$. From this using Lemma~\ref{sum}, the eigenvalues are: 
$\lambda_j=\dfrac{2f(\theta_j)}{g(\theta_j)}$ where $\theta_j=(2\pi j+\theta)/n, j=0,1,\cdots, n-1$ with the functions $f$ and $g$ as defined above.
\end{proof}
As already mentioned, a complex unit gain graph is a csg with the conjugate skew gains (called simply as gains in their case) selected from the unit circle $\mathbb{T}=\{z\in\mathbb{C}:|z|=1\}$. Thus we have the following corollary that gives the distance spectrum of complex unit gain graphs obtained by choosing $k=1$ for which the proof is omitted as it is easy to derive from the above theorem. 
\begin{cor} For an odd unbalanced complex unit gain cycle $C_n^{\varphi}$ where $n=2p+1$ with $\theta=\arg(\varphi(C_n^\varphi))$, the distance spectrum is given by: 
	$$\lambda_j=\dfrac{n\sin\big({n\theta_j/2}\big)-\frac{n-1}{2}\sin\big({(n+2)\theta_j/2}\big)-\frac{n+1}{2}\sin\big({(n-2)\theta_j/2}\big)-2\sin\big({\theta_j/2}\big)}{4\sin^3\big({\theta_j/2}\big)}$$ where $\theta_j=\frac{{(2\pi j+\theta)}}{n}, j=0,1,\cdots, n-1$.
\end{cor}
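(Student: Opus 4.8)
The plan is to specialize the preceding theorem to the case $k=1$ and then carry out the trigonometric simplification. By that theorem (or directly from the eigenvalue computation in its proof), the distance eigenvalues of $C_n^\varphi$ are $\lambda_j = 2\sum_{r=1}^p r k^r\cos(r\theta_j)$ with $\theta_j=(2\pi j+\theta)/n$, so setting $k=1$ reduces the whole problem to evaluating $\sum_{r=1}^p r\cos(r\theta_j)$ in closed form and rewriting it in the stated sine form. Equivalently, and this is the route I would take, one substitutes $k=1$ into the expression $\lambda_j=2f(\theta_j)/g(\theta_j)$ furnished by Lemma~\ref{sum}.

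First I would simplify the denominator. At $k=1$ we have $g(\theta)=(1-2\cos\theta+1)^2=(2-2\cos\theta)^2$, and using the half-angle identity $1-\cos\theta=2\sin^2(\theta/2)$ this becomes $g(\theta)=16\sin^4(\theta/2)$. Thus $2f(\theta_j)/g(\theta_j)=f(\theta_j)/\big(8\sin^4(\theta_j/2)\big)$, and since the target denominator is $4\sin^3(\theta_j/2)$, I expect exactly one factor of $\sin(\theta_j/2)$ to cancel out of the numerator.

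Next I would simplify $f(\theta)$ at $k=1$. Substituting $k=1$ collapses each coefficient to an integer, giving $f(\theta)=p\cos(p+2)\theta-(3p+1)\cos(p+1)\theta+(3p+2)\cos p\theta-(p+1)\cos(p-1)\theta+2\cos\theta-2$. The crux is to recognize that this cosine combination equals $2\sin(\theta/2)$ times the sine combination in the statement: writing $n=2p+1$, $\tfrac{n-1}{2}=p$, $\tfrac{n+1}{2}=p+1$, and applying the product-to-sum identity $2\sin A\sin B=\cos(A-B)-\cos(A+B)$ to each term of $2\sin(\theta/2)\,N(\theta)$, where $N(\theta)=n\sin(n\theta/2)-p\sin((n+2)\theta/2)-(p+1)\sin((n-2)\theta/2)-2\sin(\theta/2)$, telescopes precisely into the six cosine terms of $f(\theta)$. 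Once $f(\theta)=2\sin(\theta/2)\,N(\theta)$ is established, the single factor $\sin(\theta/2)$ cancels against one power in the denominator, yielding $\lambda_j=N(\theta_j)/\big(4\sin^3(\theta_j/2)\big)$, which is the claimed formula.

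The computation itself is entirely routine; the only real work, and the step most prone to index errors, is the product-to-sum bookkeeping that converts the cosine form of $f$ into $2\sin(\theta/2)$ times the sine numerator. The half-angle shifts $(n\pm 2)\theta/2$ paired with $\sin(\theta/2)$ must line up with the integer frequencies $(p\pm 1)\theta,\ p\theta,\ (p+2)\theta$ appearing in $f$, and keeping the coefficients $p,\ 3p+1,\ 3p+2,\ p+1$ straight under the substitution $n=2p+1$ is where care is needed.
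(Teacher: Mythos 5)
Your proposal is correct and follows exactly the route the paper intends: the paper omits the proof, stating only that the corollary follows from the preceding theorem by taking $k=1$, and your substitution into $2f(\theta_j)/g(\theta_j)$ together with the identity $f(\theta)=2\sin(\theta/2)\,N(\theta)$ (which I have checked term by term via the product-to-sum expansion) supplies precisely the omitted computation. The coefficients $p$, $3p+1$, $3p+2$, $p+1$ and the cancellation of one factor of $\sin(\theta_j/2)$ against $g(\theta)=16\sin^4(\theta/2)$ all check out.
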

\section*{Acknowledgement} The second and the third authors are research scholars at Kannur University, Kerala, India and they are thankful to the Research Directorate of Kannur University for providing access to the library and lab facilities there.
\section*{References}
\begin{enumerate}	
\bibitem{spec1} Drago\v{s} M.\ Cvetkovi\'c, Michael Doob, and Horst Sachs, \textbf{Spectra of Graphs: Theory and Application}, VEB Deutscher Verlag der Wissenschaften, Berlin, and Academic Press, New York, 1980.
	\bibitem{fh} F.\ Harary, \textbf{Graph Theory},  Addison Wesley, Reading, Mass., 1972.
	\bibitem{fh1} F. Buckley and F. Harary, \textbf{Distance in graphs}, Addison–Wesley Pub. Co., New York (1990). 
	\bibitem{j1} J. Hage and T. Harju, T, The size of switching classes with skew gains, \textit{Discrete Mathematics}. 215 (2000) 81--92.
	\bibitem{rajesh} Ranjit Mehatari, M. Rajesh Kannan, and A. Samanta, On the adjacency matrix of a complex unit gain graph, \textit{Linear and Multilinear Algebra}. 70 (9) (2022) 1798--1813.
	\bibitem{rh} Roger A. Horn and Charles R. Johnson, \textbf{Matrix Analysis}, Cambridge University Press, Cambridge, 1990.
\bibitem{kannan} A. Samantha and M. Rajesh Kannan, Gain distance for complex unit gain graphs, Disc. Math. 345 (1) (2022) 112634.
	\bibitem{roshslap} Roshni T Roy, Shahul Hameed K, and Germina K. A. On two Laplacian matrices for skew gain graphs. \textit{Electronic Journal of Graph Theory and Applications}. 9 (1) (2021) 125--135. 
	\bibitem{nreff} N. Reff, Spectral properties of complex unit gain graphs, \textit{Linear Algebra and its Appl.} 436 (9) (2012) 3165--3176.
	\bibitem{shkg} Shahul Hameed K and K. A. Germina, Balance in gain graphs--A spectral analysis, \textit{Linear Algebra and its Appl.} 436 (2012) 1114--1121.
\bibitem{shkdist} Shahul K. Hameed, T.V. Shijin, P. Soorya, K.A. Germina, and T. Zaslavsky, Signed distance in signed graphs, Linear Algebra Appl. 608 (2021) 236--247.
\bibitem{shkcsg} Shahul Hameed Koombail and Ramakrishnan K O, Balance theory: An extension to conjugate skew gain graphs, Comm. in Comb. and Optim., Accepted
\bibitem{shksgain} Shahul Hameed K, Roshni T Roy, Soorya P and Germina K A, On the characterisitic polynomial of skew gain graphs.  \textit{Southeast Asian Bulletin of Mathematics}, Accepted.	
	\bibitem{tz1} T.\ Zaslavsky, Signed graphs.  Discrete Appl.\ Math.\ 4 (1982) 47--74.  Erratum.  Discrete Appl.\ Math.\ 5 (1983) 248.
\bibitem{tz2} T. Zaslavsky, Biased graphs--I. Bias, balance, and gains. J. Combin. Theory Ser.B 47 (1989) 32--52.
\bibitem{tz3} T. Zaslavsky, A mathematical bibliography of signed and gain graphs and allied areas.  VII Edition, {\it Electronic J. Combinatorics}.  8 (1998), Dynamic Surveys, 124 pp.

\end{enumerate}
  
\end{document}